\documentclass{amsart}
 \usepackage[foot]{amsaddr}

\usepackage{amsmath}
\usepackage{amssymb}
\usepackage{mathtools}
\usepackage{cite}
\usepackage[alphabetic]{amsrefs}
\usepackage{stmaryrd}
\usepackage[mathscr]{euscript}
\usepackage{amsthm}
\usepackage{enumerate}
\usepackage{enumitem}
\usepackage{hyperref}
\usepackage{caption}
\usepackage{xcolor}
\usepackage{comment}
\usepackage{appendix}
\usepackage{subcaption}

\newtheorem{thm}{Theorem}[section]
\newtheorem{cor}{Corollary}[section]
\newtheorem{lem}{Lemma}[section]
\newtheorem{prop}{Proposition}[section]

\newtheorem{defn}{Definition}[section]
\newtheorem{rem}{Remark}[section]
%--------------------------------------------

\begin{document}
\title{Stability of Convex Spheres}

\author{Davi M\'aximo and Hunter Stufflebeam}

\address{University of Pennsylvania, Department of Mathematics, Philadelphia, PA, USA.}

\email{dmaxim@math.upenn.edu or hstuff@sas.upenn.edu}

\begin{abstract}
We prove that strictly convex $2$-spheres, all of whose simple closed geodesics are close in length to $2\pi$, are $C^0$ Cheeger-Gromov close to the round sphere. 
\end{abstract}
\maketitle
%--------------------------------------------

\section{Introduction}

A famous theorem of V. Toponogov states that the length of any simple, closed geodesic on a closed surface $M$ of curvature $K\geqslant 1$ is bounded above by $2\pi$. Moreover, $M$ has a simple closed geodesic of length $2\pi$ if and only if $M$ is isometric to the standard round $\mathbb S^2$. In this paper, we investigate the question of stability for such a result. Namely, does the presence of long simple closed geodesics imply global closeness to the round metric in some sense?

This natural question was the original motivation for the work of the second named author in \cite{HS} regarding convex disks with large boundary. There it was observed that there are spherically symmetric metrics on the sphere with $K\geqslant 1$ having simple closed geodesics of length arbitrarily close to $2\pi$ which are far from the round metric. These football-like spheres illustrate that having \emph{some} simple closed geodesic of nearly maximal length is not enough to get global closeness to the round sphere. Nevertheless, this class of examples suggests various ways to adapt the question so as to potentially obtain stability. In this paper, we explore the situation where the width of the sphere, which is realized as the \emph{shortest} length of a closed geodesic, is close to $2\pi$. Our main result is that this condition does indeed yield stability, and even for a relatively strong topology.

In what follows, given a 2-manifold $(M,g)$ we use the notation $K_g$ for the Gaussian curvature and $L_{g}$ for the length functional of $g$. The main invariant of interest to us, the {\it width} of $g$, is denoted by \[\ell_g:=\inf\{L_g(\gamma): \gamma \text{ is a closed geodesic on } M\}.\] 
%We recall that by work of Calabi-Cao in \cite{CalabiCao}, $\ell_g$ is attained on spheres of non-negative curvature by a \emph{simple} closed geodesic. 
Our main result is the following stability statement for $\ell_g$ for metrics with $K_g\geq 1$: (Here and throughout the paper, $d_{C^0}$ denotes the $C^0$ Cheeger-Gromov distance between Riemannian manifolds)
\begin{thm}
For any $\delta>0$, there exists an $\varepsilon=\varepsilon(\delta)>0$ such that the following holds. Let $(M, g)$ be a closed Riemannian surface of curvature $K_g\geqslant 1$, and suppose that $\ell_g\geqslant 2\pi-\varepsilon.$ Then \[d_{C^0}\left((M,g),(\mathbb S^2, g_{rd})\right)\leqslant\delta.\]
\end{thm}
A few remarks about the invariant $\ell_g$ are in order. A classical min-max theorem of G. D. Birkhoff \cite{BirkhoffGeodesic}, using 1-parameter family of curves, states that every Riemannian metric on 2-sphere admits a closed geodesic. Thus, for any smooth metric $g$, $\ell_g$ is well-defined. Birkhoff's result was subsequently improved by L. Lyusternik-L. Schnirelmann \cite{LyusternikSchnirelmann}, showing the existence of at least 3 closed geodesics on any smooth 2-sphere (see also \cite{GraysonCurveShortening1989} for an argument using curve shortening flow). Later, using geometric measure theory and results from F. J. Almgren \cite{AlmgrenHomotopyGroups}, J. Pitts \cites{PittsThesis, Pitts1974} developed a new existence theory, which became known in the literature as Almgren-Pitts min-max theory, and in this framework (see Section 2.2 for more details) E. Calabi-J. Cao \cite{CalabiCao} showed that, on spheres of non-negative curvature, $\ell_g$ is attained by a \emph{simple} closed geodesic.

\begin{rem}
    The statement of our Theorem 1.1 is similar to the main result proved by R. Bamler-D. M\'aximo in \cite{BMrf} about the stability of the area estimate (appearing as Theorem 1.1) of F. Coda-Marques-A.Neves in \cite{MNSphere} for min-max minimal two-spheres in three-manifolds. Their proof, which also takes place under the assumption of positive sectional curvature, nonetheless uses altogether different methods. 
\end{rem} 

Our proof proceeds in two main steps by way of contradiction, and via a result of T. Yamaguchi in \cite{Yamaguchi}, it suffices to establish the result using the Gromov-Hausdorff distance $d_{GH}$ in place of $d_{C^0}$. Given a hypothetical sequence of such spheres $M_k$ with $\ell_{g_k}:=\ell_k\nearrow 2\pi$ which remain bounded away from round $\mathbb S^2$ in $d_{GH}$, we seek to extract a subsequence converging to the round sphere. In the first step, we use synthetic approaches from Alexandrov geometry to obtain a subconverging sequence, whose limit is a two-dimensional Alexandrov space of $\mathrm{curv}\geqslant 1$. Using a non-collapsing estimate of Croke \cite{Croke}, the Perelman Stability Theorem, and K. Grove-P. Petersen's resolution of the Lytchak problem \cite{GrovePetersenLens} (see Section \ref{2.1}), we identify this limit space as a gluing of two round spherical lunes, also known as \emph{Alexandrov lenses}. 

In the second step, we further identify the metric of the limit as that of the round sphere, forcing the desired contradiction. The main tool in this part is the Almgren-Pitts min-max method for one-cycles. Assuming for the sake of contradiction that the limit metric were not round, we construct a sweepout of the limit with width smaller than $2\pi$. We then pull this sweepout back to the sequence spheres $M_k$ using bi-Lipschitz maps to obtain sweepouts of the $M_k$ with boundedly small widths. By then applying the Almgren-Pitts  theory, we would be able to violate the definition of $\ell_k$ on $M_k$ far enough along in the sequence, thereby identifying the limit as the round sphere and establishing the main theorem.

\subsection{Notation and Conventions} 

Throughout this paper, we will often work on metric spaces which underlie Riemannian manifolds. For that reason, we will reference metric quantities by acknowledging the overlying metric. For example, we will write $d_g$ for the distance function deriving from the Riemannian metric $g$. Similarly, for example, the $2$-dimensional Hausdorff measure deriving from the metric structure of $(M,g)$ will be denoted by $\mathrm{Area}_{g}$. In case a measure is omitted from an integral, it is understood that the implied measure is the standard volume measure on the underlying space. Finally, we will follow tradition in using notation such as $\Psi=\Psi(x)=\Psi(x|a_1,a_2,\ldots)$ to denote a non-negative function, which may change from line to line, depending on a variable $x$ and any number of parameters $a_i$ with the property that if the $a_i$ are all held fixed, $\Psi\searrow 0$ as $x\to 0$. 

\subsection{Acknowledgements}
The authors are indebted to the anonymous reviewers whose comments significantly improved our manuscript. D.M. is thankful to Ruobing Zhang for helpful discussions and to Alexander Lytchak for useful references in Alexander Geometry. H.S is thankful to their advisor D.M. for his constant support and encouragement. 

\section{Preliminaries} 

\subsection{Metric Geometry and the Convergence of Alexandrov Spaces}\label{2.1}

We will assume some familiarity with the basics of metric geometry, and especially Alexandrov spaces. For some good general references, see the book \cite{BBI} and the seminal paper \cite{BGP}. In particular, we will assume that the reader is familiar with the definition of Alexandrov spaces ($i.e.$ complete length spaces with a metric notion of curvature being bounded from below), as well as some technical concepts such as $(n,\delta)$-strainers, spaces of directions, and singular points. For the definitions and a quick recap of these latter concepts, the first section of \cite{Yamaguchi} is a great source. In what follows, we will denote the class of $n$-dimensional Alexandrov spaces of $\mathrm{curv}\geqslant\kappa$ by $\mathrm{Alex}^n(\kappa)$. 

We will utilize the following important result due to Grove-Petersen \cite{GrovePetersenLens} about the structure of Alexandrov spaces with maximal boundary volume, which answered the following questions of A. Lytchak: If an Alexandrov space with boundary has $\mathrm{curv}\geqslant 1$, how big can the boundary be? And, if there are extremizers, what are they? First recall that if $(E, d_E)$ is a metric space with diameter $\leqslant\pi$, then the \emph{spherical suspension of $E$} is the metric space
\[\Sigma_1 E:=\left(\left(\left[-\frac{\pi}{2}, \frac{\pi}{2}\right]\times E\right)/\sim, d\right)\] where $\sim$ is the equivalence relation which collapses $\{-\pi/2\}\times E$ and $\{\pi/2\}\times E$ to points $p_-$ and $p_+$, respectively, leaves all other points alone, and where $d$ is the metric 
\begin{equation*}
 d(p_1, p_2)=\arccos\left(\sin(r_1)\sin(r_2)+\cos(r_1)\cos(r_2)\cos d_E(e_1,e_2)\right)
 \end{equation*}
with $p_i=\overline{(r_i, e_i)}\in (\left[-\pi/2, \pi/2\right]\times E)/\sim$. A key example for us is a so-called \emph{Alexandrov lens}, denoted by $L^n_\alpha$, which is the intersection of two round spherical $n-$hemispheres making an angle of $\alpha\in (0,\pi]$. In the two dimensional case which we are concerned about, we can equivalently write $L^2_\alpha=\Sigma_1 [0,\alpha]$---the spherical suspension of an interval of length $\leqslant\pi$ (see Figure \ref{fig:lensdetail}). These special Alexandrov spaces are important because of the following comparison (due to A. Petrunin) and rigidity theorem (due to Grove-Petersen): 

\begin{thm}[Boundary Volume Comparison and Rigidity Theorem cf. \cite{Petruninsemiconcave} and \cite{GrovePetersenLens}]\label{GrovePetersen}
Suppose $X\in\mathrm{Alex}^n(1)$ with $\partial X\neq\emptyset$. Then \[\mathrm{vol}^{n-1}(\partial X)\leqslant \mathrm{vol}^{n-1}(\mathbb S^{n-1}, g_{rd}),\] with equality iff $X$ is isometric to an Alexandrov lens $L^n_\alpha$ with $0<\alpha\leqslant\pi$.    \end{thm}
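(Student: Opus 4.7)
The plan is to derive the inequality from the doubling construction in Alexandrov geometry combined with a direct Bishop–Gromov volume bound applied to the boundary itself, and to handle rigidity via a spherical comparison rigidity for the doubled space.

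I would begin by forming the metric double $\widetilde X = X\cup_{\partial X}X$ across the boundary. By the Perelman–Petrunin doubling theorem, $\widetilde X$ is a closed space in $\mathrm{Alex}^n(1)$, and $\partial X$ embeds as the fixed-point set of the canonical involution, realizing it as a codimension-one extremal subset. By the Perelman–Petrunin structure theory of extremal subsets, $\partial X$ equipped with its intrinsic length metric is itself a closed space in $\mathrm{Alex}^{n-1}(1)$.

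The inequality then follows by applying the Bishop–Gromov global volume bound to this closed $(n-1)$-dimensional Alexandrov space of $\mathrm{curv}\geq 1$: its intrinsic Hausdorff $(n-1)$-measure is at most $\mathrm{vol}^{n-1}(\mathbb S^{n-1})$. Because the subspace metric on $\partial X$ inherited from $X$ is dominated by the intrinsic length metric, the corresponding subspace Hausdorff measure is no larger, yielding $\mathrm{vol}^{n-1}(\partial X)\leq\mathrm{vol}^{n-1}(\mathbb S^{n-1})$.

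For rigidity, suppose equality holds. Then the Bishop–Gromov rigidity clause forces $\partial X$ to be isometric to the round $\mathbb S^{n-1}$ in its intrinsic metric, and equality of the subspace and intrinsic Hausdorff measures forces the two metrics on $\partial X$ to coincide. Consequently $\widetilde X$ is a closed space in $\mathrm{Alex}^n(1)$ containing a totally geodesically embedded round $\mathbb S^{n-1}$ as equator. A spherical Hessian comparison applied to the concave distance function $d(\cdot,\partial X)$, whose level sets are forced by rigidity to be exact spherical rescalings of $\partial X$ out to distance $\pi/2$, together with a Cheng-type maximal-diameter argument adapted to equators rather than antipodal points, identifies $\widetilde X$ as the metric double of a spherical lens; hence $X\cong L^n_\alpha$ for some $\alpha\in(0,\pi]$.

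The main obstacle will be this last rigidity step: propagating the boundary equality $\partial X\cong\mathbb S^{n-1}$ outward to pin down the global structure of $\widetilde X$. This requires the Alexandrov analogue of splitting-type rigidity for the spherical comparison, together with a careful identification of the resulting suspension-like structure with the double of a lens.
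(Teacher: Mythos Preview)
The paper does not actually prove this theorem; it is quoted as a known result, with the inequality attributed to Petrunin and the rigidity to Grove--Petersen. So there is no in-paper argument to compare against, but your proposal should still be measured against what those references do, and on that count it contains a genuine gap.

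The fatal step is your claim that $\partial X$, with its intrinsic length metric, lies in $\mathrm{Alex}^{n-1}(1)$. This is precisely the \emph{boundary conjecture} in Alexandrov geometry, and it is open in general. The Perelman--Petrunin structure theory of extremal subsets gives stratification and topological information, but it does \emph{not} furnish a lower curvature bound on the intrinsic metric of an extremal subset; you cannot cite it for that purpose. Without this, you have no Bishop--Gromov inequality to apply to $\partial X$, and the volume bound collapses. (In fact, in dimension $n=2$ the claim ``$\partial X\in\mathrm{Alex}^1(1)$'' for a circle boundary is literally the statement that its length is at most $2\pi$, so your argument is circular there.)

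The actual arguments in the cited references proceed entirely inside the ambient space $X$ and avoid any appeal to intrinsic curvature of $\partial X$. Petrunin's inequality comes from the semiconcavity of $\mathrm{dist}(\cdot,\partial X)$ under $\mathrm{curv}\geqslant 1$: gradient-flow/comparison techniques control how the volume of the distance tubes $\{d(\cdot,\partial X)\leqslant t\}$ grows relative to the spherical model, and integrating this against the global Bishop--Gromov bound $\mathrm{vol}^n(X)\leqslant\mathrm{vol}^n(\mathbb S^n_+)$ yields the boundary-volume bound. Grove--Petersen then analyze the equality case of this monotonicity to force a warped-product (spherical suspension) structure on $X$, identifying it as a lens. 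Your rigidity sketch has the right endgame shape---propagating structure outward from $\partial X$ via the distance function---but it must be grounded in the ambient comparison, not in an assumed curvature bound on the boundary.
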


We will also need to deal with sequences of Alexandrov spaces and their convergence. To that end, we first recall for the readers convenience the notion of Gromov-Hausdorff convergence, first introduced by D. Edwards in \cite{DEdwards} and popularized by M. Gromov in \cite{GromovStructures} and \cite{GromovPolynomial}. The Gromov-Hausdorff distance between two compact metric spaces $(X_1,d_1)$ and $(X_2,d_2)$ can be defined by
\[
d_{\mathrm{GH}}((X_1,d_1),(X_2,d_2))=\inf_{Z} \{d_H^Z(\phi_1(X_1),\phi_2(X_2))\}
\]
where the infimum is taken over all complete metric spaces $(Z,d^Z)$ and all distance preserving maps $\phi_i:X_i\to Z$, and where $d_H^Z$ denotes the standard Hausdorff distance between two compact subsets of $(Z,d^Z)$: For any compact  $X, Y\subset Z$,
\[d_H^Z(X,Y)=\inf\{r>0: X\subset B_r(Y)\text{ and } Y\subset B_r(X)\}\] with $B_r(\cdot)$ denoting the $r$ neighborhood of a subset of $(Z,d^Z)$. We say that a metric spaces $(X_j,d_j)$ converge in the $\mathrm{GH}$-sense to a metric space $(X_\infty,d_\infty)$ if
\[
d_{\mathrm{GH}}((X_j,d_j),(X_\infty,d_\infty)) \to 0.
\] An equivalent, and useful, way of the framing Gromov-Hausdorff distance is via the following notion: a (potentially noncontinuous) map of metric spaces $f:(X,d_X)\to (Y, d_Y)$ is called an $\varepsilon$-\emph{Gromov-Hausdorff approximation} ($\varepsilon$-GH approximation for short) provided that $f(X)\subset Y$ is an $\varepsilon$-net and \[\mathrm{dis}(f):=\sup_{x,y\in X}\left\vert d_X(x,y)-d_Y(f(x),f(y))\right\vert<\varepsilon.\] It is a nice exercise to show that $d_{GH}(X,Y)<\varepsilon$ implies the existence of a $2\varepsilon$-GH approximation, and the existence of an $\varepsilon$-GH approximation implies $d_{GH}(X,Y)<2\varepsilon$ (cf. \cite{BBI} Chapter 7). Besides $\varepsilon$-GH approximations, another class of maps which we will deal with is the class of so-called $\varepsilon$-\emph{almost isometries}. Such a mapping $f:(M,d_M)\to (X, d_X)$ is an $\varepsilon$-GH approximation satisfying the further condition that for every $x,y\in M$ \[\left\vert\frac{d_X(f(x),f(y))}{d_M(x,y)}-1\right\vert<\varepsilon.\]

There are three primary results about the Gromov-Hausdorff convergence of Alexandrov spaces which will be useful to us, the first of which is the following result allowing us to produce limits from a sequence: 

\begin{thm}[Gromov Compactness cf. \cite{BBI} Theorem 10.7.2]\label{GH} 
Fix $n\in\mathbb Z^{\geqslant 0}$, $\kappa\in\mathbb R$, and $D>0$. Let $\mathcal M(n,\kappa, D)$ denote the class of Alexandrov spaces of Hausdorff dimension $\leqslant n$, $\mathrm{curv}\geqslant\kappa$, and $\mathrm{diam}\leqslant D$. This class, endowed with the Gromov-Hausdorff topology, is compact.
\end{thm}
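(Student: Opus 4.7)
The plan is to establish the two standard ingredients that together yield compactness of $\mathcal M(n,\kappa,D)$ in the Gromov--Hausdorff topology: uniform total boundedness (giving precompactness) and closedness of the class under GH limits.

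For precompactness, I would invoke Gromov's general criterion: a family of compact metric spaces is GH-precompact if and only if it is uniformly totally bounded, meaning that for every $\varepsilon>0$ there is an $N(\varepsilon)$ so that every space in the family can be covered by $N(\varepsilon)$ balls of radius $\varepsilon$. To establish this for $\mathcal M(n,\kappa,D)$, the key input is a Bishop--Gromov-type packing estimate for Alexandrov spaces: by the volume comparison theorem for $\mathrm{curv}\geqslant\kappa$ (proved synthetically in \cite{BGP}), the Hausdorff $n$-measure of a metric ball of radius $r$ in $X\in\mathrm{Alex}^n(\kappa)$ is bounded above in terms of $n$, $\kappa$, and $r$, while the Hausdorff $n$-measure of the whole space is also controlled in terms of $n$, $\kappa$, and $D$. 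Applying this to a maximal family of disjoint $\varepsilon/2$-balls around an $\varepsilon$-separated net yields a uniform bound $N(\varepsilon)=N(n,\kappa,D,\varepsilon)$ on the size of any $\varepsilon$-net, which is exactly uniform total boundedness.

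For closedness, I would show that if $(X_j,d_j)\to (X_\infty,d_\infty)$ in the GH sense with each $X_j\in\mathcal M(n,\kappa,D)$, then $X_\infty\in\mathcal M(n,\kappa,D)$ as well. The diameter bound is trivially preserved since $\mathrm{diam}$ is continuous in $d_{GH}$. The curvature bound $\mathrm{curv}\geqslant\kappa$ is characterized by a purely metric condition (Toponogov triangle comparison for every geodesic triangle), and this condition is stable under GH limits: given any triangle in $X_\infty$, one pulls back its vertices through $\varepsilon$-GH approximations to $X_j$, uses completeness and the length-space property to approximate geodesics, and passes to the limit in the comparison inequality. Finally, the Hausdorff dimension bound passes to the limit as well, since dimension can only drop under collapsing GH limits of Alexandrov spaces, a fact which follows from the volume comparison together with a covering/packing argument showing that a space requiring at most $N(\varepsilon)\lesssim\varepsilon^{-n}$ many $\varepsilon$-balls has Hausdorff dimension $\leqslant n$.

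The main obstacle I anticipate is not either ingredient in isolation, but the careful stability argument showing that triangle comparison passes to GH limits in the presence of possible collapse. One must construct near-minimizing geodesics in $X_\infty$ as limits of near-minimizing geodesics in $X_j$, which requires an Arzel\`a--Ascoli extraction along the approximations together with the fact that length-space structure is preserved under GH convergence of complete length spaces (\cite{BBI}, Theorem 7.5.1). Once this technical step is in hand, the limit inherits triangle comparison and hence lies in $\mathrm{Alex}^{\leqslant n}(\kappa)$, and combining with Gromov precompactness gives the stated compactness.
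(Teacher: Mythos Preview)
The paper does not supply its own proof of this theorem; it is quoted as a preliminary result from \cite{BBI}, Theorem 10.7.2, and used as a black box in the argument. There is therefore nothing in the paper to compare your sketch against.

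That said, your sketch is the standard route and is essentially correct. One small technical point worth tightening: your packing step invokes the Hausdorff $n$-measure of balls, which implicitly assumes the space has dimension exactly $n$, whereas $\mathcal M(n,\kappa,D)$ allows dimension $\leqslant n$. The cleanest fix is to phrase the packing bound directly via the Bishop--Gromov ratio inequality at whatever the actual Hausdorff dimension $m\leqslant n$ of the given space is; this bounds the number of disjoint $\varepsilon/2$-balls inside a ball of radius $D$ by a constant depending only on $m$, $\kappa$, $D$, and $\varepsilon$, and since there are only finitely many possible values of $m$ one obtains a uniform $N(\varepsilon)$ over the whole class.
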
 Of particular relevance to us is the consequence that the class of Riemannian manifolds of $\dim=2$ with $\mathrm{sec}\geqslant\kappa>0$ is pre-compact in the Gromov-Hausdorff topology, and that the limit points of such sequences lie in the class of Alexandrov spaces of $\mathrm{curv}\geqslant\kappa$ and $\dim\leqslant 2$.

 The second important result we will need from metric geometry is the following celebrated result of G. Perelman: \begin{thm}[Perelman Stability Theorem cf. \cite{BBI} Theorem 10.10.5 and \cite{Kapovitch}]\label{PerelmanStability} 
Fix $\kappa\in\mathbb R$ and a compact Alexandrov space $X\in\mathrm{Alex}^n(\kappa)$. Then for any $\delta>0$, there exists an $\varepsilon=\varepsilon(X,\delta)>0$ such that if $Y\in\mathrm{Alex}^n(\kappa)$ has $d_{GH}(X,Y)<\varepsilon,$ then $X$ and $Y$ are homeomorphic via a $\delta$-GH approximation.  
\end{thm}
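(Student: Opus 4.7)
The plan is to build an explicit homeomorphism $\Phi : X \to Y$ for every $Y \in \mathrm{Alex}^n(\kappa)$ sufficiently Gromov--Hausdorff close to $X$ by transporting the local manifold charts of $X$ onto $Y$ via a GH approximation and patching them together. The starting point is the structure theory of Burago--Gromov--Perelman \cite{BGP}: at every $(n,\delta)$-strained point $p \in X$, the strainer map $\sigma_p : q \mapsto (d(a_1,q),\ldots,d(a_n,q))$ is a bi-Lipschitz homeomorphism of some ball $B_r(p)$ onto its image in $\mathbb{R}^n$. Since the strained set is open and dense and $X$ is compact, I would first fix a finite cover of $X$ by balls $B_{r_j/2}(p_j)$ each carrying a strainer chart $\sigma_j$ with uniformly controlled radius, strainer quality, and bi-Lipschitz constant, together with a subordinate partition of unity.

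Given $Y$ with $d_{GH}(X,Y) < \varepsilon$, fix an $\varepsilon$-GH approximation $f : X \to Y$ and set $\tilde p_j := f(p_j)$, $\tilde a_i^{(j)} := f(a_i^{(j)})$. Because the $(n,\delta)$-strainer condition is encoded by finitely many distance (in)equalities that are preserved up to $O(\varepsilon)$ by a GH approximation, the pushforwards remain $(n,\delta')$-strainers at $\tilde p_j$ for $\delta'$ close to $\delta$, and the induced maps $\tilde\sigma_j$ on $Y$ are bi-Lipschitz with essentially the same constants. This produces a parallel system of local charts on $Y$, indexed by the same combinatorics as on $X$, with the local candidates $\tilde\sigma_j^{-1}\circ\sigma_j : B_{r_j/2}(p_j) \to Y$ forming the building blocks of $\Phi$.

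The heart of the argument, following Perelman's method as worked out in \cite{Kapovitch}, is to glue these local maps into a single continuous $\Phi$. One proceeds inductively: having defined $\Phi$ on the union of the first few charts, extend across the next overlap by an isotopy supported in the common domain and generated by the gradient flow of a carefully chosen \emph{admissible} function --- an Alexandrov analog of a Morse function whose level sets furnish a topological fibration inside the strained region. Uniform bi-Lipschitz control on each patch ensures that the final $\Phi$ is injective, hence a homeomorphism, and a careful bookkeeping of the distortion introduced at each step yields the $\delta$-GH approximation conclusion once $\varepsilon$ is small enough. The main obstacle --- and the reason this theorem is genuinely deep despite its clean statement --- is executing the gluing near points where strainers nearly degenerate or where $X$ has boundary; this is precisely where the standard smooth Morse-theoretic toolkit fails, and one has to invoke the full Perelman--Petrunin machinery of gradient flows and admissible functions on Alexandrov spaces to drive the inductive extension.
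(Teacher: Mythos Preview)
The paper does not prove this statement at all: Theorem~\ref{PerelmanStability} is quoted in the Preliminaries section as a known result, with references to \cite{BBI} and \cite{Kapovitch}, and is then used as a black box in the proof of the main theorem. So there is no in-paper proof to compare your proposal against.

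As to the proposal itself, there is a genuine gap. Your plan begins by asserting that, since the $(n,\delta)$-strained set is open and dense and $X$ is compact, one can fix a finite cover of $X$ by balls carrying strainer charts. Open and dense does not give a cover: a general $X\in\mathrm{Alex}^n(\kappa)$ has a non-empty singular set (points whose space of directions is strictly smaller than the round $\mathbb{S}^{n-1}$), and in a neighborhood of such a point no $(n,\delta)$-strainer chart exists for small $\delta$. You do flag this at the end (``near points where strainers nearly degenerate \ldots\ one has to invoke the full Perelman--Petrunin machinery''), but that caveat swallows the whole theorem: the strained region is exactly where the problem is easy, and the actual content of Perelman's argument is what happens elsewhere. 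The proof in \cite{Kapovitch} does \emph{not} proceed by patching strainer charts with a partition of unity; it runs a reverse induction on the dimension $k$ of strata, uses admissible maps $X\to\mathbb{R}^k$ built from distance functions to carefully positioned point configurations, proves a local fibration theorem for their regular values, and produces the homeomorphism via controlled deformations along gradient-like curves of semiconcave functions rather than by averaging local inverses. Without supplying those ingredients, your outline is a description of the easy part together with a citation for the hard part, not a proof.
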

In particular, for a GH-converging, non-collapsing sequence $X_i\to X$ in $\mathrm{Alex}^n(\kappa)$ (i.e. the dimension of the limit $X$ is also $n$), for all large $i$ there are homeomorphisms $f_i:X_i\to X$ which have $\mathrm{dis}(f_i)\to0$. The third result, due to T. Yamaguchi, upgrades these homeomorphisms to $\varepsilon$-almost isometries when limit space $X$ is known to have only \emph{mild} singularities. To be precise, given a point $p\in X$ and $\delta>0$ we define the $\delta$-strain radius of $X$ at $p$ by \[\delta\mathrm{str.rad}(p)=\sup\{r>0: \text{there exists an $(n,\delta)$-strainer based at $p$ with length $r$}\}\] and the $\delta$-strain radius of $X$ by $\delta\mathrm{str.rad}(X)=\inf_{p\in X}\delta\mathrm{str.rad}(p).$ As explained in Section 1 of \cite{Yamaguchi}, the $\delta$-strain radius is effectively a non-smooth analogue of the injectivity radius. For our purposes, having \emph{mild} singularities amounts to asking for a lower bound on $\delta\mathrm{str.rad}(X)$:\footnote{Recall (see for example \cite{BGP} Theorem 10.4) that there is a $\delta_n>0$ such that any point $p\in X\in\mathrm{Alex}^n(\kappa)$ with $\delta\mathrm{str.rad}(p)\geqslant\mu_0>0$, for some $0\leqslant\delta\leqslant\delta_n$, is in the domain $B_\sigma(p)$, $\sigma<<\mu_0$, of a coordinate chart into $\mathbb R^n$ which is a $\Psi(\delta, \sigma)$-almost isometry.} 

\begin{thm}[Theorem 0.2, Corollary 0.4, and Remark 4.20 \cite{Yamaguchi}]\label{Yam}
Fix a positive integer $n$ and a small $\mu_0>0$. Then there exist $\delta_n,\varepsilon_n(\mu_0)>0$ and a function $\tau=\tau(\delta, \varepsilon\vert n, \mu_0)>0$ with $\tau(\delta, \varepsilon\vert n, \mu_0)\searrow 0$ as $\delta, \varepsilon\searrow 0$ such that the following holds:

Let $M, X\in\mathrm{Alex}^n(-1)$ with $\delta\mathrm{str. rad}(X)>\mu_0$ for some $0<\delta\leqslant\delta_n$, and assume that $d_{GH}(M,X)<\varepsilon\leqslant\varepsilon_n(\mu_0)$. Then there exists a $\tau(\delta,\varepsilon)$-almost-isometry $f:M\to X$. Moreover, in case $M$ and $X$ have $C^1$ differentiable structures with $C^1$ distance functions, $f$ can be taken to be $C^1$ as well. 
\end{thm}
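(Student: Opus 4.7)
The overall strategy is to construct $f$ by locally building strainer-based coordinate almost-isometries on the $M$ side and then gluing them. The hypothesis $\delta\mathrm{str.rad}(X) > \mu_0$, via the footnoted consequence of \cite{BGP}, gives every $p \in X$ a coordinate chart $\varphi_p : B_\sigma(p) \to \mathbb R^n$ of the form $\varphi_p(q) = (d_X(a_1,q),\ldots,d_X(a_n,q))$, associated to some $(n,\delta)$-strainer $\{(a_i, b_i)\}_{i=1}^n$ of length $\geq \mu_0$, and $\varphi_p$ is a $\Psi(\delta,\sigma)$-almost isometry onto its image. First I would pick a finite subcover $\{B_{\sigma/2}(p_\alpha)\}_{\alpha=1}^N$ of $X$ (using compactness) with corresponding strainers $\{(a_i^\alpha, b_i^\alpha)\}$ and charts $\varphi_\alpha$.

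Next, using an $\varepsilon$-GH approximation $h : X \to M$, I would transport the strainer data to $M$ by setting $\tilde p_\alpha := h(p_\alpha)$, $\tilde a_i^\alpha := h(a_i^\alpha)$, and analogously for $\tilde b_i^\alpha$. Since the strainer conditions are finitely many distance inequalities stable under distortion $\leq \varepsilon$, and since $M, X \in \mathrm{Alex}^n(-1)$, the transported points form an $(n, \delta + \Psi(\varepsilon|\mu_0))$-strainer at $\tilde p_\alpha$ of length $\geq \mu_0 - \varepsilon$. Therefore, for $\varepsilon$ small enough (and $\delta \leq \delta_n$), the candidate coordinate maps $\tilde\varphi_\alpha(\tilde q) := (d_M(\tilde a_i^\alpha, \tilde q))$ are $\Psi(\delta,\sigma)$-almost isometries on $B_{\sigma/2}(\tilde p_\alpha)$. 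The local map $f_\alpha := \varphi_\alpha^{-1} \circ \tilde\varphi_\alpha : B_{\sigma/2}(\tilde p_\alpha) \to X$ is then a $\tau(\delta,\varepsilon)$-almost isometry onto a neighborhood of $p_\alpha$, and a triangle-inequality argument using $h$ together with uniqueness (up to $\tau$) of strainer coordinates forces $d_X(f_\alpha(\tilde q), f_\beta(\tilde q)) \leq \tau$ on pairwise overlaps.

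The main obstacle is gluing these $f_\alpha$ into a single $\tau$-almost isometry $f : M \to X$. My plan is to choose a Lipschitz partition of unity $\{\chi_\alpha\}$ subordinate to $\{B_{\sigma/2}(\tilde p_\alpha)\}$ and define $f(\tilde q)$ as the local center of mass in $X$ of the cluster $\{f_\alpha(\tilde q)\}_{\alpha : \tilde q \in \mathrm{supp}\,\chi_\alpha}$ with weights $\{\chi_\alpha(\tilde q)\}$. Barycenters in Alexandrov spaces are well-defined on sufficiently small scales by convexity under a lower curvature bound, and averaging over a cluster of mutual diameter $\leq \tau$ incurs only an additive error of order $\tau$, so standard estimates should show that $f$ retains both the distortion bound and the net condition, yielding the desired $\tau$-almost isometry. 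For the $C^1$ refinement asserted in the last sentence, I would replace the raw distance functions $d_X(a_i^\alpha, \cdot)$ and $d_M(\tilde a_i^\alpha, \cdot)$ with $C^1$ smoothings (via Greene--Wu regularization or a gradient exponential flow of semiconcave functions), which under the $C^1$ hypothesis on the differentiable structures and their distance functions preserves the almost-isometric character of the strainer charts; this turns both the local construction and the barycentric averaging into a $C^1$ map.
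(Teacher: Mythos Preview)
The paper does not prove this statement: Theorem~\ref{Yam} is quoted as a black box from Yamaguchi's paper \cite{Yamaguchi} (specifically Theorem~0.2, Corollary~0.4, and Remark~4.20 there), and no argument is given or even sketched in the present paper. So there is no ``paper's own proof'' to compare your proposal against; your write-up is an attempt to reconstruct Yamaguchi's argument rather than anything the authors do.

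For what it is worth, your outline is broadly in the spirit of Yamaguchi's construction: one does build strainer maps on $X$, transport the strainer data across a GH approximation to get strainer maps on $M$, and then assemble a global map by an averaging/center-of-mass procedure. Two cautions, though. First, the gluing step is the hard part, and invoking ``barycenters in Alexandrov spaces are well-defined on small scales by convexity'' is too quick: distance functions in $\mathrm{Alex}^n(-1)$ are not convex, only semiconcave, so the existence, uniqueness, and Lipschitz dependence of a center of mass require the specific embedding/averaging machinery Yamaguchi develops rather than a one-line convexity claim. Second, for the $C^1$ upgrade you cannot simply ``smooth the distance functions'' and keep the argument intact; Yamaguchi's $C^1$ statement uses that the strainer coordinate maps are already $C^1$ under the hypothesis and then smooths the \emph{gluing} step. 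As written, your sketch would need substantial work at both of these points to become a proof.
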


We will need to apply a slightly more general result in our subsequent work, which nonetheless follows directly from the inherently local argument of Yamaguchi as written (see also the proof of Corollary 0.6 in \cite{Yamaguchi}). In particular, we need a localized version of this result to give such a $\tau$-almost-isometry on a subregion of $X$ where the $\delta\mathrm{str.rad}$ estimate holds uniformly, while $X$ might have more serious singularities elsewhere. 

\begin{cor}[Local Yamaguchi Theorem \cite{Yamaguchi}]\label{locYam}
Suppose, in the situation of Theorem \ref{Yam} above, that the estimate $\delta\mathrm{str. rad}(x)>\mu_0$ is only assumed to hold for $x$ in some region $\Omega\subset X$. Then there exists a homeomorphic subregion $\tilde\Omega\subset M$ and a $\tau(\delta,\varepsilon)$-almost-isometry $f:\tilde \Omega\to \Omega$ (with their restricted metrics).
\end{cor}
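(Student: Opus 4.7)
The plan is to run Yamaguchi's proof of Theorem \ref{Yam} verbatim, exploiting the fact that his construction of the $\tau$-almost isometry is entirely local: the $\delta\mathrm{str.rad}$ hypothesis enters only pointwise, to guarantee that at each $x \in X$ one can fix an $(n,\delta)$-strainer of length $\mu_0$ based at $x$ and thereby obtain a $\Psi(\delta, \sigma)$-almost isometric distance-function chart $\Phi_x: B_\sigma(x) \to \mathbb{R}^n$ for $\sigma \ll \mu_0$ (as in the footnote preceding the theorem). The global almost isometry $f: M \to X$ of Theorem \ref{Yam} is then assembled from the $\Phi_x$ and their $M$-counterparts via a partition-of-unity / center-of-mass patching argument carried out in Section 4 of \cite{Yamaguchi}.

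First I would cover $\Omega$ by finitely many balls $\{B_{\sigma/2}(x_j)\}$ centered at points $x_j \in \Omega$, and build the local charts $\Phi_{x_j}$ on each. Next, using the $\varepsilon$-GH approximation $h: X \to M$ furnished by $d_{GH}(M,X)<\varepsilon$, I would transfer the strainer endpoints for each $x_j$ across $h$ to produce near-strainers in $M$ based at the points $\tilde x_j := h(x_j)$. Since the strainer inequalities are open conditions on pairwise distances, and thus stable under $\varepsilon$-GH approximations for $\varepsilon \ll \delta$, this yields companion charts $\tilde\Phi_{\tilde x_j}: B_{\sigma/2}(\tilde x_j) \to \mathbb{R}^n$ in $M$ that are also $\Psi$-almost isometries onto their images. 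Setting $\tilde\Omega := \bigcup_j B_{\sigma/2}(\tilde x_j) \subset M$ produces the desired subregion, and applying Yamaguchi's patching procedure to the local comparison maps $\Phi_{x_j}^{-1} \circ \tilde\Phi_{\tilde x_j}: B_{\sigma/2}(\tilde x_j) \to X$ yields a single $\tau(\delta,\varepsilon)$-almost isometry $f: \tilde\Omega \to \Omega$.

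The main technical point, which I expect to be the only real obstacle, is controlling the behavior near $\partial\Omega$: the patching at points close to $\partial\Omega$ involves local charts whose strainer endpoints could, a priori, lie outside $\Omega$ where the uniform $\delta\mathrm{str.rad}$ bound need not hold. I would handle this either by shrinking $\Omega$ by a controlled amount $O(\sigma)$ at the outset, ensuring that every strainer endpoint used in the patching remains inside the original $\Omega$, or by simply absorbing the resulting boundary-layer error into the final function $\tau(\delta,\varepsilon)$. With this minor adjustment, the remainder of Yamaguchi's argument, including verification of the almost-isometry property on overlaps and the $C^1$ upgrade in the differentiable case, applies without change.
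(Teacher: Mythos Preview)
Your proposal is correct and takes essentially the same approach as the paper: the paper does not give a standalone proof of this corollary at all, merely remarking that it ``follows directly from the inherently local argument of Yamaguchi as written (see also the proof of Corollary~0.6 in \cite{Yamaguchi}).'' Your sketch fleshes out precisely this local patching argument, and in fact supplies more detail---including the boundary-layer issue near $\partial\Omega$---than the paper itself does.
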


Lastly, concerning preliminary geometric convergence results, we will use the following proposition allowing us to reduce proving convergence in the $C^0$-Cheeger-Gromov sense to proving convergence in $GH$. 

\begin{prop}\label{GHtoC0}
    Suppose that $(M_k, g_k)$ and $(X, g)$ in the statement of Theorem \ref{Yam} are all smooth, closed Riemannian $n$-manifolds with $\mathrm{sec}(M_k)\geqslant\kappa$ for all $k$. Then $(M_k, g_k)$ converges to $(X,g)$ in the $C^0$-Cheeger-Gromov sense as well.  
\end{prop}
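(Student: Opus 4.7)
The plan is to invoke Yamaguchi's Theorem \ref{Yam} to produce $C^1$ almost-isometries $f_k \colon M_k \to X$ and then promote the distance-level comparison into a $C^0$ bound on pulled-back metric tensors. As a preliminary observation, since $X$ is a smooth closed Riemannian $n$-manifold, every point admits a normal coordinate chart which is a uniform near-isometry onto a Euclidean ball; consequently $\delta_n\mathrm{str.rad}(X) \geqslant \mu_0$ for some uniform $\mu_0 = \mu_0(X) > 0$, and the distance functions from suitable base points are smooth on small balls. This places us squarely in the $C^1$ regime of Theorem \ref{Yam}, so that the maps produced below can be taken to be $C^1$.

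Fixing $\delta = \delta_n$, for all sufficiently large $k$ we have $d_{GH}(M_k, X) < \varepsilon_n(\mu_0)$, so Theorem \ref{Yam} furnishes $C^1$ $\tau_k$-almost-isometries $f_k \colon M_k \to X$ with $\tau_k \searrow 0$. Bi-Lipschitz constants arbitrarily close to $1$ force surjectivity between closed connected manifolds of equal dimension, and combining $C^1$ regularity with the quantitative control that $df_k$ has singular values in $[1 - \Psi(\tau_k), 1 + \Psi(\tau_k)]$---an output of the explicit distance-coordinate averaging construction in \cite{Yamaguchi}---the inverse function theorem renders $f_k$ a $C^1$ diffeomorphism for all large $k$. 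Writing $\Phi_k := f_k^{-1} \colon X \to M_k$ then yields diffeomorphisms whose differentials $(d\Phi_k)_x$ have singular values in $[(1+\Psi(\tau_k))^{-1}, (1-\Psi(\tau_k))^{-1}]$ uniformly in $x \in X$. Unpacking this comparison on symmetric $2$-tensors, one obtains
\[\|\Phi_k^* g_k - g\|_{C^0(X, g)} \leqslant \Psi(\tau_k) \to 0,\]
which is precisely $C^0$-Cheeger-Gromov convergence of $(M_k, g_k)$ to $(X, g)$.

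The principal obstacle is rigorously extracting the differential-level estimate on $df_k$ from the purely metric almost-isometry conclusion of Theorem \ref{Yam}. I would handle this by working locally in the $(n, \delta)$-strain charts alluded to in the footnote: in each such chart, both $M_k$ and $X$ are $\Psi(\delta, \sigma)$-Euclidean, and Yamaguchi's map $f_k$, constructed as a smoothing of the distance-coordinate maps, comes equipped with the requisite quantitative control on its Jacobian. Patching these local estimates via a fixed partition of unity on $X$ then yields the claimed global $C^0$ bound on $\Phi_k^* g_k - g$, at which point the $C^0$-Cheeger-Gromov conclusion follows by the reduction explained in the previous paragraph.
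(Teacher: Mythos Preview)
Your proof is correct in outline and matches the paper's strategy: invoke Theorem~\ref{Yam} to get $C^1$ almost-isometries, convert the distance-level estimate into a pointwise bound on the differential, and read off $C^0$ convergence of the pulled-back metrics. The one substantive difference is that you treat the step ``metric almost-isometry $\Rightarrow$ singular values of $df_k$ in $[1-\Psi,1+\Psi]$'' as the principal obstacle, proposing to extract it from the internal construction in \cite{Yamaguchi} via strain charts and a partition of unity. The paper, by contrast, observes that this step is elementary once $f_k$ is known to be $C^1$: for any unit tangent vector $v\in T_pM_k$, take a unit-speed geodesic $\gamma$ with $\gamma'(0)=v$ and pass to the limit $t\to 0$ in the almost-isometry inequality
\[
1-\tau_k \;\leqslant\; \frac{d_g\big(f_k(p),f_k(\gamma(t))\big)}{d_{g_k}\big(p,\gamma(t)\big)} \;\leqslant\; 1+\tau_k,
\]
using that $d_{g_k}(p,\gamma(t))=t$ and $d_g(f_k(p),f_k(\gamma(t)))/t\to |df_k(v)|_g$ by $C^1$ regularity. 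This immediately gives $(1-\tau_k)|v|_{g_k}\leqslant |df_k(v)|_g\leqslant (1+\tau_k)|v|_{g_k}$ for every $v$, hence $\|f_k^*g - g_k\|_{C^0}\leqslant\Psi(\tau_k)$, with no appeal to the specific chart-level construction of $f_k$. So your detour through Yamaguchi's internals is unnecessary; the conclusion you want is a direct consequence of the \emph{statement} of Theorem~\ref{Yam} plus a one-line limit argument.
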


\begin{proof}
    By Theorem \ref{Yam}, for all large $k\geqslant 1$ we can obtain $C^1$ diffeomorphisms $f_k: (X,g)\to (M_k, g_k)$ which satisfy the Lipschitz estimates \[\left\vert\frac{d_{g_k}(f_k(x),f_k(y))}{d_g(x,y)}-1\right\vert<\Psi(k^{-1})\] for every $x,y\in X$. Since the $f_k$ are $C^1$, this implies that \[1-\Psi(k^{-1})\leqslant\|Df_k\|_{g,g_k}\leqslant 1+\Psi(k^{-1}),\] where $\|-\|_{g,g_k}$ denotes the operator norm with respect to the metrics $g$ and $g_k$. But then for any $v\in TX$, we have that \[(1-\Psi(k^{-1}))|v|_g\leqslant|v|_{f_k^*g_k}=|Df_k(v)|_{g_k}\leqslant (1+\Psi(k^{-1}))|v|_g\] which proves the claim. 
\end{proof}

\subsection{Almgren-Pitts Min-Max Theory for Integral Cycles}

%The exposition in this section follows \cite{CalabiCao}. 
Let $(M,g)$ be a closed Riemannian manifold, and $\Lambda^1(M)$ the space of smooth one-forms on $M$. Given an oriented one-dimensional rectifiable set $\Gamma\subset M$ there is a corresponding linear functional on $\Lambda^1(M)$, called a \emph{rectifiable one-current}, defined by \[T(\phi)=\int_\Gamma \phi.\]The \emph{boundary of $T$}, denoted by $\partial T$, is the linear functional on $C^\infty(M)$ defined by \[\partial T(f)=T(df).\] If $\partial T$ is the trivial current, which we denote in any dimension by $\llbracket 0\rrbracket$, then $T$ is said to be \emph{closed}. In short, we call a closed and rectifiable one-current a \emph{one-cycle}. The \emph{group of one-cycles in $M$ with $\mathbb Z$ coefficients} is then given by \[\mathcal {Z}_1(M, \mathbb Z)=\left\{\sum_{i=1}^\infty a_iT_i~\colon a_i\in\mathbb Z\text{~and~} T_i\text{~is a rectifiable one-cycle}\right\}.\]

We endow $\mathcal {Z}_1(M, \mathbb Z)$ with the \emph{weak-topology}, by defining convergence \[T_i\rightharpoonup T\] as the condition that \[\lim_{i\to\infty}T_i(\phi)=T(\phi)\] for every fixed $\phi\in\Lambda^1(M)$. We may also endow $\mathcal {Z}_1(M, \mathbb Z)$ with the \emph{mass norm}, which is simply the operator norm of a current defined with respect to the induced Riemannian norm on $\Lambda^1(M)$: \[\mathbb{M}(T):=\sup_{\substack{\phi\in \Lambda^1(M) \\ |\phi|_g\leqslant 1}} T(\phi).\] When the underlying rectifiable set $\Gamma$ of $T$ is a Lipschitz curve with finitely many self-intersections, it turns out that \[\mathbb{M}(T)=L_g(\Gamma).\]

In this paper, we will use integral one-cycles to construct one-parameter sweepouts of various manifolds.

\begin{defn}[Sweepout by Integral One-Cycles cf. \cite{CalabiCao}]\label{rough}
Let $(\mathbb S^2, g)$ be a Riemannian sphere. A 1-sweepout of $(\mathbb S^2, g)$ by integral one-cycles is a continuous non-contractible path \[\sigma\in \pi_1(\mathcal Z_1((\mathbb S^2, g), \mathbb Z), \llbracket 0\rrbracket).\] That is,
\begin{itemize}
\item For each $t\in [-1,1]$, $(\sigma)_t:=\sigma(t)\in \mathcal Z_1((\mathbb S^2, g), \mathbb Z)$;
\item The map $t\mapsto (\sigma)_t$ is continuous with respect to the weak topology on $\mathcal Z_1((\mathbb S^2, g), \mathbb Z)$;
\item $\sigma(-1)$ and $\sigma(1)$ are induced by point curves;
\item The path $\sigma$ has $[\sigma]\neq 0$ in $\pi_1(\mathcal Z_1((\mathbb S^2, g), \mathbb Z), \llbracket 0\rrbracket)$.
\end{itemize}
\end{defn}

One then defines the following Almgren-Pitts min-max width:

\begin{defn}[Min-Max Width cf. \cite{CalabiCao}]\label{width}
For a given Riemannian sphere $(\mathbb S^2, g)$, the min-max value $W(\mathbb S^2, g)$ for the mass norm $\mathbb M$ is given by \[W(\mathbb S^2, g):=\inf_{[\sigma]\neq 0}\sup_{t\in [-1,1]}\mathbb{M}(\sigma(t)).\]
\end{defn}

The following theorem, originally due to Almgren-Pitts, is one of the two main min-max results which we will utilize:

\begin{thm}[Almgren-Pitts  cf. \cites{Pitts1974,PittsThesis, CalabiCao}]\label{AlmgrenPitts}
For a given Riemannian sphere $(\mathbb S^2, g)$, the min-max value $W(\mathbb S^2, g)$ is realized by a nontrivial closed geodesic $\gamma_0$. Moreover, if $\gamma_0$ is the shortest closed geodesic on $(\mathbb S^2, g)$, then $\gamma_0$ is either simple or a figure eight. 
\end{thm}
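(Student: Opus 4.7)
The plan is to establish the result in two stages, corresponding to the two assertions.

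For the existence part, I would follow the classical Almgren-Pitts min-max scheme adapted to 1-cycles on surfaces. Starting with a minimizing sequence of sweepouts $\{\sigma^i\}$ satisfying $\max_t \mathbb{M}(\sigma^i(t)) \to W$, discretized to fit the Pitts framework, I apply the pull-tight deformation, which modifies slices away from those of nearly maximal mass so that every accumulation varifold of the maximal slices is stationary. By compactness of integral varifolds with bounded mass, extract a subsequence of maximal slices converging weakly to a stationary integral 1-varifold $V$ on $(\mathbb{S}^2, g)$ with $\mathbb{M}(V) = W$. Such a $V$ is a \emph{geodesic network}: a finite union of geodesic arcs meeting at vertices where the unit tangents sum to zero. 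To promote $V$ to a single smooth closed geodesic, I invoke Pitts' regularity theory via the replacement method along the sweepout parameter, ruling out interior junctions and forcing $V = m \cdot [\gamma_0]$ for a closed geodesic $\gamma_0$ and integer $m \geq 1$. Non-triviality of $[\sigma]$ in $\pi_1(\mathcal{Z}_1)$ guarantees $W>0$, and hence $\gamma_0$ is nontrivial.

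For the shape of $\gamma_0$, suppose $\gamma_0$ is the shortest closed geodesic and assume for contradiction that $\gamma_0$ has at least two transverse self-intersections. Then $\gamma_0$ cuts $\mathbb{S}^2$ into faces $F_1,\ldots,F_r$ (with $r \geq 4$ by Euler's formula applied to the induced CW-structure), and each boundary $\partial F_j$ is a proper closed sub-arc of $\gamma_0$ with $L_g(\partial F_j) < L_g(\gamma_0) = W$. Each $\partial F_j$ is contractible in $\mathbb{S}^2$, so applying Birkhoff's curve-shortening process inside $F_j$ (a length non-increasing flow) either yields a nontrivial closed geodesic strictly shorter than $\gamma_0$---contradicting that $\gamma_0$ is the shortest closed geodesic---or contracts $\partial F_j$ to a point through closed curves of length $\leq L_g(\partial F_j) < W$. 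Using these face-wise contractions as sub-sweepouts and concatenating them in accordance with the dual graph of the face decomposition, I build a one-parameter family $\sigma^*$ of one-cycles on $\mathbb{S}^2$ with $\max_t \mathbb{M}(\sigma^*(t)) < W$ and $[\sigma^*]\neq 0$---the latter because the combined ``area swept'' under the Almgren isomorphism $\pi_1(\mathcal{Z}_1(\mathbb{S}^2,\mathbb{Z}))\cong \mathbb{Z}$ equals the total area of $\mathbb{S}^2$. This contradicts the definition of $W$, so $\gamma_0$ has at most one self-intersection, i.e.\ is simple or a figure-eight.

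The principal obstacle is the regularity step in the first stage: ruling out fattened limits, higher multiplicities, and non-removable junction vertices in the stationary varifold $V$, which requires the full replacement machinery of Pitts adapted to a one-parameter family of one-cycles on a surface. A secondary difficulty is verifying in the second stage that the face-wise contractions can be assembled into a topologically nontrivial sweepout; this requires a careful combinatorial matching of induced orientations at shared edges of adjacent faces so that each edge of $\gamma_0$ is traversed the correct net number of times and the resulting loop represents the nonzero generator under the Almgren isomorphism.
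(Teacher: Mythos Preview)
The paper does not prove this theorem; it is quoted as a background result from the literature (Pitts' thesis and \cite{CalabiCao}) and is used as a black box in the proof of the main theorem. There is therefore no ``paper's own proof'' to compare against.

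That said, your sketch for the first assertion is broadly faithful to the Almgren--Pitts scheme as it is usually presented for one-cycles on a surface: pull-tight, extraction of a stationary integral $1$-varifold, and regularity to obtain a closed geodesic. You correctly flag that the regularity step (ruling out junctions and controlling multiplicity) is the nontrivial ingredient, and that this is where one must import the full Pitts machinery.

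Your argument for the second assertion is more heterodox and has a genuine gap. The classical route (as in \cite{CalabiCao}, following Lusternik) does not build a competing sweepout from the face decomposition; instead it uses a local reconnection/shortcut at a crossing together with Birkhoff curve-shortening to directly produce a strictly shorter closed geodesic. In your approach, the claim that every face boundary has length strictly less than $L_g(\gamma_0)$ is not justified (one must rule out a face whose boundary walk traverses every edge of the crossing graph), and the assembly of the face-wise contractions into a single homotopically nontrivial loop in $\mathcal Z_1$ is delicate: one must control the \emph{maximum} mass at the transition times where adjacent face-sweepouts are concatenated, and it is at exactly those moments that the slice can equal a full face boundary or even a union of several, so the strict inequality $\max_t\mathbb M(\sigma^*(t))<L_g(\gamma_0)$ is not evident. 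The Almgren-isomorphism bookkeeping you mention is necessary but does not by itself furnish the mass bound. If you want to push this sweepout strategy through, you would need a careful scheduling of the face contractions so that at every time the current slice is a single face boundary (or something strictly shorter), together with a verification that for $n\geqslant 2$ crossings no face boundary has length $L_g(\gamma_0)$; both points require additional argument beyond what you have written.
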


We will pair this with the following theorem of Calabi-Cao:

\begin{thm}[Calabi-Cao \cite{CalabiCao}]\label{CalabiCao} 
If $g$ is a $C^3$ metric on $\mathbb S^2$ with non-negative curvature, then any nontrivial closed geodesic of shortest length is simple. 
\end{thm}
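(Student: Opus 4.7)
I proceed by contradiction: suppose $\gamma$ is a nontrivial shortest closed geodesic on $(\mathbb{S}^2,g)$ with $K_g\geq 0$, yet $\gamma$ has a self-intersection. The plan is to construct a 1-sweepout $\sigma$ of $\mathbb{S}^2$ (in the sense of Definition \ref{rough}) with $\sup_{t}\mathbb{M}(\sigma(t))<L_g(\gamma)$. Since, by Theorem \ref{AlmgrenPitts}, the width $W(\mathbb{S}^2,g)$ is realized by a closed geodesic of length at least $L_g(\gamma)$ by minimality of $\gamma$, such a sweepout would force $W(\mathbb{S}^2,g)<L_g(\gamma)\leq W(\mathbb{S}^2,g)$, the desired contradiction.

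To build the sweepout, I first choose an ``innermost'' self-intersection $p$ of $\gamma$, meaning one of the two geodesic arcs of $\gamma$ based at $p$ is itself a simple loop $\gamma_1$; such a $p$ exists because $\gamma$ has finitely many self-intersections on $\mathbb{S}^2$. The loop $\gamma_1$ has length strictly less than $L_g(\gamma)$, is a transverse crossing (a tangential self-intersection of a geodesic would force it to coincide with itself on an open set), and bounds a disk $D_1\subset\mathbb{S}^2$ on whose side its corner at $p$ has interior angle $\pi-\theta<\pi$, where $\theta\in(0,\pi)$ is the crossing angle. Cutting off this corner along a short geodesic chord inside $D_1$ yields, via the triangle inequality, a smooth simple closed curve $\tilde\gamma_1\subset D_1$ of length strictly less than $L_g(\gamma)$. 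I then assemble $\sigma$ as the concatenation of two half-sweepouts, one on each side of $\tilde\gamma_1$. Inside the disk $\tilde D_1$ bounded by $\tilde\gamma_1$, Birkhoff's curve shortening flow (via the Gage--Hamilton--Grayson theorem on surfaces) either converges to a closed geodesic of length at most $L_g(\tilde\gamma_1)<L_g(\gamma)$---immediately contradicting the minimality of $\gamma$---or smoothly shrinks $\tilde\gamma_1$ to a point $p_1\in\tilde D_1$ through a continuous family of embedded simple loops of length uniformly below $L_g(\gamma)$. On the complementary disk $\tilde D_1^c$, I sweep from $\tilde\gamma_1$ down to a point $p_2\in\tilde D_1^c$ by equidistant curves to $\tilde\gamma_1$: since $K_g\geq 0$ forces normal Jacobi fields along geodesics to be concave, the parallel curve at distance $s$ from $\tilde\gamma_1$ has length at most $L_g(\tilde\gamma_1)$, and past the cut locus of $\tilde\gamma_1$ these curves pinch into disjoint unions whose total mass stays strictly under $L_g(\gamma)$. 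Concatenating gives the desired $\sigma:[-1,1]\to\mathcal{Z}_1(\mathbb{S}^2,\mathbb{Z})$, with endpoints point cycles, $[\sigma]\neq 0$, and $\sup_t\mathbb{M}(\sigma(t))<L_g(\gamma)$.

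The main obstacle is controlling the mass of the sweepout on the outer disk $\tilde D_1^c$ once the parallel curves to $\tilde\gamma_1$ cross its cut locus and bifurcate into multiple components. Tracking this evolution as a continuous path of integral one-cycles in the weak topology, and verifying that the total mass remains strictly below $L_g(\gamma)$ throughout, is the delicate step---and it is here that the nonnegative curvature hypothesis is used in its most essential, global form.
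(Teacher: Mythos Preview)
The paper does not supply its own proof of this theorem; it is quoted as a black-box result from \cite{CalabiCao}. So there is no in-paper argument to compare against, and the question is whether your sketch stands on its own.

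It does not. The critical error is the assertion that ``the parallel curve at distance $s$ from $\tilde\gamma_1$ has length at most $L_g(\tilde\gamma_1)$.'' Nonnegative curvature gives concavity of the Jacobi fields (and hence of $L(s)$), but it does not give monotonicity: for that you would need $L'(0)\leqslant 0$, i.e.\ $\tilde\gamma_1$ convex toward the \emph{outer} disk $\tilde D_1^c$. Your construction forces the opposite. You cut the corner of $\gamma_1$ by a chord lying in the \emph{convex} disk $D_1$, so $\tilde\gamma_1$ is convex toward $\tilde D_1\subset D_1$ and concave toward $\tilde D_1^c$. By Gauss--Bonnet on $\tilde D_1$ (where $\gamma_1$ had exterior angle $\theta$ at the corner), one finds $L'(0)=\oint_{\tilde\gamma_1}k_g\approx\theta>0$, so the equidistants into $\tilde D_1^c$ initially get \emph{longer}. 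A tiny geodesic loop near a pole on the round sphere already shows the effect: its outward equidistants reach length $2\pi$ at the equator. There is no mechanism in your argument preventing $L(s)$ from exceeding $L_g(\gamma)$, and the ``delicate step'' you flag past the cut locus is in fact preceded by this more basic failure.

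The deeper issue is structural: you attempt to sweep the complementary disk $\tilde D_1^c$ using only data from the single loop $\gamma_1$, ignoring that the \emph{other} loop $\gamma_2$ of the figure-eight lives inside $\tilde D_1^c$. Calabi--Cao's actual argument exploits both loops and, crucially, both \emph{resolutions} of the crossing at $p$: one resolution produces a single simple closed curve, the other produces two disjoint simple closed curves, each of total length strictly below $L_g(\gamma)$; Grayson's theorem is then applied to these resolved curves (not to equidistants), and the resulting flows are assembled into a nontrivial sweepout of width strictly less than $L_g(\gamma)$. Your half-sweepout on $\tilde D_1$ via curve shortening is in the right spirit, but the outer half needs the second loop, not parallel curves.
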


\begin{rem}
By considering sweepouts that depend upon $p$ parameters, $p>1$, one can generalize Definition \eqref{width} and obtain other min-max invariants called $p$-widths, which are a nonlinear analog of the spectrum of the Laplace–Beltrami operator of  $g$ (cf. \cite{GromovSpectrum1988}) and even obey an analogous asymptotic Weyl Law (cf. \cite{WeylLawVolume}). Following Theorem \ref{AlmgrenPitts}, Pitts showed that the higher parameter widths
are attained by the length of geodesic nets (cf. \cite{Pitts1974}). This was recently improved by work of O. Chodosh-C. Mantoulidis in \cite{ChodoshMantoulidis2023}, who showed that the $p$-widths of any closed Riemannian two-manifold correspond to a union of closed immersed geodesics.
\end{rem}

\begin{rem}
Although we are only concerned with two-manifolds in this paper, the min-max theory of Almgren-Pitts goes far beyond and the rich body of work studying it is impossible to adequately survey here. We highlight the solution of S.T. Yau's conjecture in \cite{YauSeminar} about the existence of infinitely many embedded minimal surfaces: the work of Marques-Neves \cite{MNInfinite} established the existence of infinitely many distinct, closed, embedded, smooth minimal hypersurfaces in any closed $n$-dimensional Riemannian manifold, $3\leqslant n\leqslant 7$, without stable minimal hypersurfaces; and the work of  A. Song in \cite{SongInfinite} analyzed the remaining cases, when stable minimal hypersurfaces are present. In addition, Marques-Neves \cite{MNWillmore} also used min-max theory to resolve the famous \emph{Willmore Conjecture}, and in \cite{MultOne}, X. Zhou proved that the minimal hypersurface produced by the Almgren-Pitts theory in the same ambient setting as \cite{MNInfinite} has, in the orientable case, multiplicity one. Lastly, we spotlight the work of A. Song in \cite{SongEmbedded}, wherein a higher dimensional analogue of Theorem \ref{CalabiCao} is proven: there exists a least area closed minimal hypersurface in any closed $n$-dimensional Riemannian ambient manifold with $3\leqslant n\leqslant 7$, and any such hypersurface is embedded.
\end{rem}

For technical reasons pertaining to the possibility of working on a singular limit, we will need to construct our sweepouts for the sequence manifolds $(M_k, g_k)$ out of two pieces. The large parts of the sweepouts will come from a foliation of the region of the limit space which is bounded away from the singular set. The remaining un-swept portions of the $M_k$, which consist of small disks, are then swept-out separately using the following result: 

\begin{thm}[Liokumovich-Nabutovsky-Rotman Theorem 1.1 \cite{LNR}, see also Papasoglu Theorem 2.2,\cite{Papasoglu}]\label{LNR}
For any two-dimensional Riemannian disk $(D,g)$ and any point $p\in\partial D$, there exists a Lipschitz homotopy $\gamma_t$ of loops based at $p$ with $\gamma_0=\partial D$ and $\gamma_1=\{p\}$ such that \[L_g(\gamma_t)\leqslant 2L_g(\partial D)+686\sqrt{\mathrm{Area_g}(D)}+2\mathrm{diam}_g(D)\] for every $t\in [0,1]$.      
\end{thm}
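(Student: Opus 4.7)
The plan is to construct the desired homotopy by a slicing-and-filling procedure in the style of Papasoglu \cite{Papasoglu}. The idea is to subdivide $D$ into a fan of thin sub-disks by short arcs connecting $\partial D$ to $p$, and then to peel off these sub-disks one at a time, with each peeling accomplished by a small contracting homotopy inside the sub-disk.

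First, I would parameterize $\partial D$ by arc length as $c: [0, L] \to \partial D$ with $c(0) = c(L) = p$ and $L = L_g(\partial D)$, and choose a subdivision $0 = s_0 < s_1 < \cdots < s_N = L$ of mesh on the order of $\sqrt{\mathrm{Area}_g(D)}$. For each $i$, let $q_i := c(s_i)$ and pick an arc $\alpha_i \subset D$ from $q_i$ to $p$ of length at most $\mathrm{diam}_g(D)$ (for example, a nearly-minimizing path in $D$), taking $\alpha_0$ and $\alpha_N$ to be the trivial path at $p$. These arcs partition $D$ into topological sub-disks $R_i$ each bounded by $\alpha_{i-1}$, $c|_{[s_{i-1}, s_i]}$, and $\alpha_i^{-1}$, with $L_g(\partial R_i) \leq 2\,\mathrm{diam}_g(D) + O(\sqrt{\mathrm{Area}_g(D)})$ and $\mathrm{Area}_g(R_i) \leq \mathrm{Area}_g(D)$.

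Next, I would build $\gamma_t$ by peeling off the $R_i$ in order, starting from $R_N$ at $t = 0$ and ending with $R_1$ near $t = 1$. At the start of the time interval devoted to peeling $R_k$, the loop takes the form $c|_{[0, s_k]} \cdot \alpha_k^{-1}$; at its end, the form $c|_{[0, s_{k-1}]} \cdot \alpha_{k-1}^{-1}$. This transition is realized by a sub-homotopy inside $R_k$ that contracts the loop $\alpha_{k-1} \cdot c|_{[s_{k-1}, s_k]} \cdot \alpha_k^{-1}$ (based at $p$) to the constant loop. Throughout the transition, the full loop $\gamma_t$ is the concatenation of $c|_{[0, s_{k-1}]} \cdot \alpha_{k-1}^{-1}$ with a loop based at $p$ that evolves inside $R_k$.

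The principal obstacle is constructing the sub-homotopy for each $R_k$ with controlled intermediate-loop length. This reduces to a \emph{filling inequality} for Riemannian 2-disks: the boundary loop of any Riemannian disk $R$ can be contracted to a chosen boundary point through loops of length at most $L_g(\partial R) + C\sqrt{\mathrm{Area}_g(R)}$ for some absolute constant $C$. Such an inequality can be proved by induction on area via a recursive short-slice cutting argument (of the same flavor as the global strategy), or by a stopped Birkhoff-type curve-shortening process applied to the boundary. Granting this filling inequality, summing the contributions from the traversed portion of $\partial D$, the current return arc $\alpha_{k-1}$, and the internal sub-homotopy, and optimizing the mesh, yields the stated bound $2L_g(\partial D) + 686\sqrt{\mathrm{Area}_g(D)} + 2\,\mathrm{diam}_g(D)$. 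The factor of $2$ in front of $L_g(\partial D)$ accommodates the possibility that the evolving internal loop in $R_k$ transiently reuses a portion of $\partial D$ together with a slice arc, while the specific constant $686$ is the numerical price one pays in the filling inequality and mesh optimization.
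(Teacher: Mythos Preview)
The paper does not prove this statement; it is quoted from \cite{LNR} (with the remark that the key ingredient is the Besicovitch Lemma, which extends to Lipschitz boundaries). So there is no ``paper's own proof'' to compare against beyond the reference.

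That said, your outline has a structural circularity. The ``filling inequality'' you invoke for each sub-disk $R_k$---contracting $\partial R$ through loops of length at most $L_g(\partial R)+C\sqrt{\mathrm{Area}_g(R)}$---is precisely the hard content of the theorem (indeed it is a \emph{stronger} statement than the one you are asked to prove, since it has no diameter term and a coefficient $1$ on the perimeter). If you grant it, you may simply apply it to $R=D$ and be done; the outer fan decomposition by arcs $\alpha_i$ buys nothing and only introduces complications (the $\alpha_i$ may intersect one another so the $R_i$ need not be disks, and your bookkeeping in fact yields $3\,\mathrm{diam}_g(D)$ rather than $2$, since $\alpha_{k-1}$ appears both in the fixed prefix and in $\partial R_k$). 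Conversely, if you do not grant the filling inequality, you have not proved anything: your sketch ``induction on area via recursive short-slice cutting'' is exactly the Besicovitch-lemma-based bisection argument of Papasoglu and Liokumovich--Nabutovsky--Rotman, and carrying it out \emph{is} the proof. So the genuine content is hidden in the black box you defer, and the visible part of your argument is superfluous.
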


Although stated for Riemannian disks with smooth boundary, the argument of \cite{LNR} can be adapted to the case of a Riemannian disks with Lipschitz boundaries as we will require. Indeed, the key tool of \cite{LNR}, the so-called Besicovitch Lemma, readily extends to such disks (see Exercise 5.6.10 (2) of \cite{BBI}). The argument then proceeds as in \cite{LNR} verbatim. The two partial sweepouts thus obtained will then be glued together to give sweepouts with good estimates for the min-max process. 

\section{Proof of the Main Theorem}

Let us restate the main theorem for convenience: 

\begin{thm}
For any $\delta>0$, there exists an $\varepsilon=\varepsilon(\delta)>0$ such that the following holds. Let $(M, g)$ be a closed Riemannian surface of curvature $K_g\geqslant 1$, and suppose that $\ell_g\geqslant 2\pi-\varepsilon.$ Then \[d_{C^0}\left((M,g),(\mathbb S^2, g_{rd})\right)\leqslant\delta.\]
\end{thm}

\begin{proof}
We will first prove the result with $d_{C^0}$ replaced by $d_{GH}$. If that result were false, there would exist some fixed $\delta_0>0$ and for each $k\geqslant 1$ an $(M_k, g_k)$ as in the statement with $\ell_{k}:=\ell_{g_k}\nearrow 2\pi$ satisfying \[d_{GH}\left((M_k,g_k),(\mathbb S^2, g_{rd})\right)\geqslant\delta_0>0.\] The proof now proceeds in two parts. In the first we show that the $(M_k, g_k)$ Gromov-Hausdorff subconverge to a two-dimensional Alexandrov space of $\mathrm{curv}\geqslant 1$, which we identify as having the structure of two Alexandrov lenses glued together at their boundaries. In the second part, using handmade sweepouts and min-max theory, we show that this limit space must be the round sphere, forcing a contradiction and establishing our main result.

By Gromov's Compactness Theorem \ref{GH}, there is an Alexandrov space $(X,d)$ of $\mathrm{curv}\geqslant 1$ and $\mathrm{dim}\leqslant 2$ such that, up to a not-relabeled subsequence, $(M_k, g_k)\to (X,d)$ in the Gromov-Hausdorff sense. By the results of Croke and others\footnote{See \cites{BeachRotman, Croke, Rotman, NabutovskyRotman, Sabourau}}, we have that \begin{equation*}\label{Croke}
    \ell_k^2\leqslant C\mathrm{Area}_{g_k}(M_k)
\end{equation*}for a universal constant $C$, so the sequence of $(M_k,g_k)$ is non-collapsing and the limit $(X,d)$ has Hausdorff dimension $2$. Perelman's Stability Theorem \ref{PerelmanStability} then yields, for all large $k$, homeomorphisms $\phi_k :(X,d)\to (M_k,d_{g_k})$ with $\mathrm{dis}(\phi_k)\to 0$. In particular, $X$ is a topological two-sphere.

\begin{rem}
    Recalling the work of Colding in \cite{ColdingShape}, one might initially hope that a volume non-collapsing estimate such as Croke's above might prove volume convergence to the round sphere. Unfortunately, however, the best dimensional constant $C$ in this inequality is still unknown, although for Riemannian two-spheres it is conjectured (\cite{Croke}) to be $2\sqrt{3}$ with the Calabi-Croke sphere as an extremizer. The current best known constant seems to be $C=32$, due to \cite{Rotman} (interestingly, the same constant works for a non-compact version of the problem cf. \cite{BeachRotman}). In any case, this estimate cannot prove the volume convergence we would like and we must proceed in a different way. Thus, we stress that here we only use Croke's estimate to prove non-collapsing, to ensure that the dimension of the limit $(X,d)$ is 2. 
\end{rem} 

\begin{rem}
Similarly, it is an attractive idea to try showing that the limit $(X,d)$ has maximal diameter $\pi$, from which it would follow that the limit has the structure of a \emph{spherical suspension}. Indeed, in \cite{GromovFilling} Gromov asked if there are dimensional constants $c(n)$ such that the shortest length of a simple closed geodesic $\ell_g$ on a closed $n-$dimensional manifold $(M^n, g)$ satisfies $\ell_g\leqslant c(n)\mathrm{diam}_g(M^n)$. If $\pi_1(M)\neq 0$, then it is known via a curve shortening argument (see section 1.1.1 of \cite{RotmanGeodesicNet2006}) that $\ell_g\leqslant 2\mathrm{diam}_g(M)$. In the case of $M^n\simeq\mathbb S^2$, Croke and others\footnote{See \cites{Croke,MaedaGeodesic,NabutovskyRotmanShortestGeodesic,SabourauGeodesic,AdelsteinPallete,RotmanLengthofShortestGeodesic2005}.} were able to show that $\ell_k\leqslant c(2)\mathrm{diam}_g(\mathbb S^2)$, with the best known constant in the non-negatively curved case being $c(2)=3$, due to Adelstein-Pallete \cite{AdelsteinPallete} and in general $c(2)=4$, due to Rotman \cite{RotmanLengthofShortestGeodesic2005}. In the end, the conjecture that the best universal constant $c(2)=2$ was disproved via Zoll metric counterexamples of Balacheff-Croke-Katz in \cite{BalacheffCrokeKatz}, satisfying $\ell_g>2\mathrm{diam}_g(\mathbb S^2)$. In any case, it is impossible for us to extract sharp enough diameter information on the $(M_k ,g_k)$ from the invariant $\ell_k\nearrow 2\pi$ alone to show that $(X,d)$ has maximal diameter $\pi$.
\end{rem}

Passing to the level of metric spaces, we may therefore work directly on a fixed background $\mathbb S^2$ by replacing $(X,d)$ with $(\mathbb S^2, d)$, and $d_{g_k}$ on $M_k$ with $d_k:=\phi^*_k d_{g_k}$ on $\mathbb S^2$. Because $\mathrm{dis}(\phi_k)\to 0$, we have that the $d_k$ converge uniformly to $d$ on $\mathbb S^2\times\mathbb S^2$. By Calabi-Cao's Min-Max Theorem \ref{CalabiCao}, each $(M_k, g_k)$ contains a simple closed geodesic $\gamma_k$ of length $\ell_k\nearrow 2\pi$, and it divides $M_k$ into two topological disks. By Croke's estimate \ref{Croke}, at least one of these disks will have $g_k$ area at least $\mathrm{Area}_{g_k}(M_k)/2\geqslant \ell_k^2/2C\geqslant c_0>0$ for all large $k\geqslant 1$, and we call this disk $U_k$. Because $\gamma_k=\partial U_k$ is geodesic, we in fact have that $(\overline{U_k}, d_{g_k})\in\mathrm{Alex}^2(1)$ for every $k\geqslant 1$. Pulling these Alexandrov sub-disks back to the limit space $(\mathbb S^2,d)$ via the homeomorphisms $\phi_k$, we obtain open disks $\mathcal U_k:=\phi_k^{-1}(U_k)$. Choosing a further subsequence, we can assume by Blaschke's Theorem (cf. Theorem 7.3.8 in \cite{BBI}) that there is a closed subset $\overline{\mathcal U}$ to which the closed disks $\overline{\mathcal U_k}$ converge in the $d$-Hausdorff distance on $(\mathbb S^2,d)$. 

We now claim that $\overline{\mathcal U}$ is in fact a closed topological disk, and that $(\overline{\mathcal U}, d)\in\mathrm{Alex}^2(1)$. First observe that the disks $(\overline{\mathcal U_k}, d_k)$ are themselves in $\mathrm{Alex}^2(1)$, since they are isometric to the disks $(\overline{U_k}, d_{g_k})$. Moreover, $(\overline{\mathcal U_k}, d)$, with the restriction of $d$, has $d_{GH}((\overline{\mathcal U_k}, d_k), (\overline{\mathcal U_k}, d))\leqslant\Psi(k^{-1})$ on account of the uniform convergence of $d_k\to d$ on $(\mathbb S^2,d)$. Therefore, we conclude that $(\overline{U_k}, d_{g_k})\to (\overline{\mathcal U}, d)$ in the Gromov-Hausdorff sense, proving that $(\overline{\mathcal U}, d)$ is an Alexandrov space of $\mathrm{curv}\geqslant 1$ and $\mathrm{dim}\leqslant 2$. Since $(\overline{U_k}, d_{g_k})$ is volume non-collapsing, we conclude that $(\overline{\mathcal U}, d)\in\mathrm{Alex}^2(1)$, and Perelman Stability implies that it is a topological disk. In particular, the boundary $\gamma$ of $\overline{\mathcal U}$ is a simple closed curve dividing $\mathbb S^2$ into two topological 2-disks. Moreover, $\gamma$ has $d$-length $2\pi$. Indeed, $\gamma=\partial\mathcal U$ arises as the Gromov-Hausdorff limit of the curves $\gamma_k=\partial U_k$ in each $M_k$ (cf. Theorem 1.2 in \cite{PetruninApplications}--recall that the boundary of an Alexandrov space is a so-called \emph{extremal set}, about which the cited theorem applies). Since this limit is non-collapsed, it follows from weak convergence of the $1$d-Hausdorff measures of the $\gamma_k$ to that of $\gamma$ (cf. \cite{BGP} Theorem 10.8) that the $d$-length of $\gamma$ is $2\pi$ as claimed. Lastly, using the fact that the $\phi_k$ are homeomorphisms of diminishing distortion it then also follows that the complement $\mathcal U^c$ in $(\mathbb S^2,d)$ also satisfies $(\mathcal U^c, d)\in\mathrm{Alex}^2(1)$, as the non-collapsed limit of the complimentary hemispheres $(U_k^c, d_{g_k})$. 

Therefore, the disk $\overline{\mathcal U}$ and its closed complement in $(\mathbb S^2, d)$ are Alexandrov spaces of $\mathrm{curv}\geqslant 1$, $\mathrm{dim}=2$, and with boundary lengths $2\pi$. By the ``Maximal Volume'' Theorem \ref{GrovePetersen} of Grove-Petersen \cite{GrovePetersenLens}, it follows that each of these disks is an \emph{Alexandrov Lens}: a spherical suspension of a closed interval of length $\leqslant \pi$. That is, the disk $\overline{\mathcal U}$ and its closed complement are isometrically realized as, respectively, 
\[L_\alpha:=\Sigma_1[0,\alpha]:=\left(\left(\left[-\frac{\pi}{2}, \frac{\pi}{2}\right]\times [0,\alpha]\right)/\sim, g=dt^2+\sin^2\left(t+\frac{\pi}{2}\right)d\theta^2\right)\]\[L_\beta:=\Sigma_1[0,\beta]:=\left(\left(\left[-\frac{\pi}{2}, \frac{\pi}{2}\right]\times [0,\beta]\right)/\sim, g=dt^2+\sin^2\left(t+\frac{\pi}{2}\right)d\theta^2\right)\] where $0<\alpha,\beta\leqslant\pi$. Our limit space $(\mathbb S^2, d)$ is therefore isometric to a gluing of these two lenses via \emph{some} isometry their boundaries $\partial L_\alpha\to\partial L_\beta$, and we will work with this explicit representation in the sequel. Note that if $p_\pm$ denote the suspension poles $\overline{\{\pm\pi/2\}\times [0,\alpha]}$ in $L_\alpha$, and $q_\pm$ denote the suspension poles $\overline{\{\pm\pi/2\}\times [0,\beta]}$ in $L_\beta$, then this gluing isometry need not send the pair $p_\pm$ to the pair $q_\pm$\footnote{For the sake of exposition, we recall that Petrunin's Gluing Theorem (see \cite{PetruninApplications}) allows one to glue two spaces in $\mathrm{Alex}^n(\kappa)$ with isometric boundaries (with respect to their induced metrics) together to obtain a new space in $\mathrm{Alex}^n(\kappa)$. In particular any isometry of the boundaries gives rise to a valid gluing map. Of relevance to our work here is the fact that a gluing of two lenses need not send suspension poles of one lens to the suspension poles of the other!}. See Figure \ref{fig:lenses}.

\begin{figure}
     \centering
     \begin{subfigure}[t]{0.3\textwidth}
         \centering
         \includegraphics[width=\textwidth]{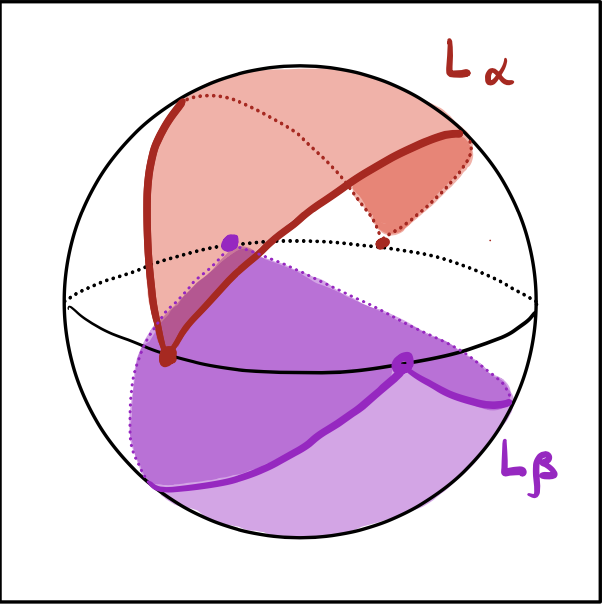}
         \caption{$(\mathbb S^2, d)$ is a gluing of $L_\alpha$ and $L_\beta$.}
         \label{fig:lenses}
     \end{subfigure}\hspace{5em}
     \begin{subfigure}[t]{0.3\textwidth}
         \centering
         \includegraphics[width=\textwidth]{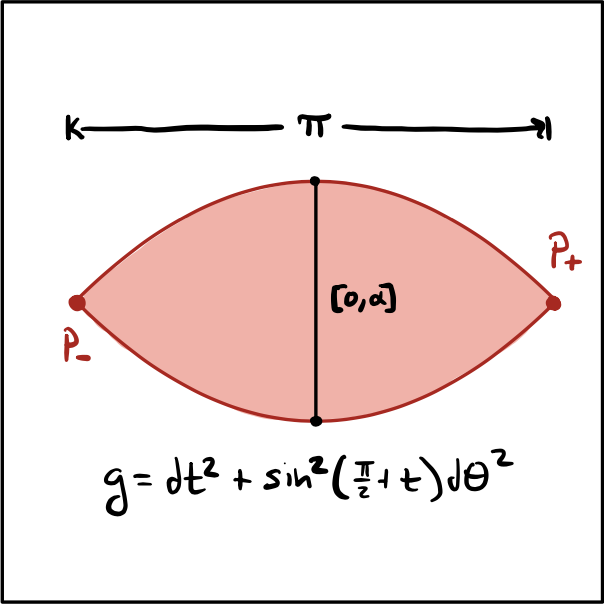}
         \caption{$L_\alpha$ in detail.}
         \label{fig:lensdetail}
     \end{subfigure}
        \caption{Alexandrov Lenses}
        \label{fig:lensesfig}
\end{figure}

Our next step is to show that $\alpha=\beta=\pi$, proving that the limit sphere $\mathbb S^2$ is simply the gluing of two round hemispheres and is therefore itself the round sphere---the desired contradiction. Without loss of generality, suppose for the sake of contradiction that there were an $\eta>0$ such that $\alpha<\pi-2\eta$. We will show that this implies the existence of simple closed geodesics on the manifolds $(M_k, g_k)$ of length strictly less than $\ell_k$ for large enough $k$, an impossibility. 

We focus on the lens $L_\beta=\Sigma_1 [0, \beta]$ to begin the construction of a sweepout of $(\mathbb S^2, d)$, and reparametrize the level sets $\overline{\{t\}\times [0,\beta]}$ by the interval $[0,\pi]$ with constant speed. In case $\beta=\pi$ (and $L_\beta$ is an honest round hemisphere), we are free to first choose the endpoints of the equator $\overline{\{0\}\times [0,\beta]}$ (lying in the boundary of $L_\beta$) to agree with the endpoints of the corresponding equator $\overline{\{0\}\times [0,\alpha]}$ of the other lens $L_\alpha=\Sigma_1 [0,\alpha]$. Similarly, parametrize the level sets $ \overline{\{t\}\times [0,\alpha]}$ in the lens $L_\alpha=\Sigma_1 [0,\alpha]$ by the interval $[0,\pi]$ with constant speed. 

Fix some $0<\tau_0<<1$ very small, and to be determined later. If the $p_\pm$ and $q_\pm$ are distinct pairs of points (i.e. if the gluing map of $L_\alpha$ with $L_\beta$ does not send the suspension poles of $L_\alpha$ to those of $L_\beta$), then we first ensure $\tau_0$ is taken so small that the closed $\tau_0$ d-balls about $p_\pm$ and $q_\pm$ are all disjoint. In the $\beta=\pi$ case we define constant speed sweepout curves $\tilde\sigma(t,\theta)$ by gluing the endpoints of $\overline{\{t\}\times [0,\beta]}$ (which has length $\leqslant\pi$) to those of the $\overline{\{t\}\times [0,\alpha]}$ in the lens $L_\alpha$ (which has length $\leqslant\alpha<\pi-2\eta$) for all $t\in [-\pi/2+\tau_0, \pi/2-\tau_0]$. This is possible because of the length preserving nature of the gluing map between the boundaries of $L_\alpha$ and $L_\beta$, and by our parametrization of $L_\beta$ relative to $L_\alpha$. See Figure \ref{fig:piglue}. The resulting Lipschitz curves $\tilde\sigma(t,\cdot)$ have length $\leqslant 2\pi-2\eta$, and the Lipschitz map $\tilde\sigma: \left[-\pi/2+\tau_0, \pi/2-\tau_0\right]\times\mathbb S^1\to (\mathbb S^2,d)$ gives a ``partial'' sweepout of $(\mathbb S^2, d)$. We also observe, using some elementary spherical geometry, that the curves $\tilde\sigma(\pm\pi/2\mp\tau_0,\cdot)$ remain at least $d$-distance $\Psi(\tau_0)>0$ away from the images of the poles $p_\pm$ in the gluing locus, and stray no farther than $\tau_0$.

In the case where $\beta<\pi$, we proceed similarly, except that we cannot dictate the location of the equator of $L_\beta$ relative to $L_\alpha$. In this case, we build our partial Lipschitz sweepout $\tilde\sigma: \left[-\pi/2+\tau_0, \pi/2-\tau_0\right]\times\mathbb S^1\to (\mathbb S^2,d)$ by taking the $t$-level set of $L_\alpha$ and gluing its endpoints in the shared boundary with $L_\beta$ to those of the unique minimizing round segment in $L_\beta$ between them. See Figure \ref{fig:lessglue}.  Since $\beta<\pi$, these glued in segments vary smoothly with their endpoints, and so we obtain a Lipschitz partial sweepout as before where every curve $\tilde\sigma(t,\cdot)$ has length $\leqslant 2\pi-2\eta$. In this case, we observe using some elementary spherical geometry that the curves $\tilde\sigma(\pm\pi/2\mp\tau_0,\cdot)$ remain at least $d$-distance $\Psi(\tau_0:\beta)>0$ away from $p_\pm$, and stray no farther away than $\tau_0$. 

\begin{figure}
     \centering
     \begin{subfigure}[b]{0.45\textwidth}
         \centering
         \includegraphics[width=\textwidth]{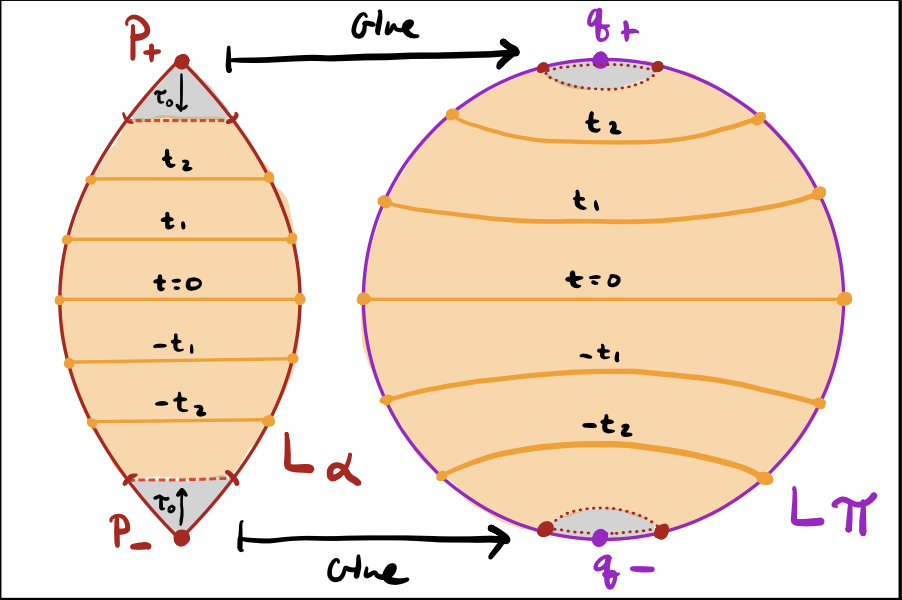}
         \caption{Defining sweepout curves when $\alpha<\pi, \beta=\pi$.}
         \label{fig:piglue}
     \end{subfigure}
     \hspace{2.5em}
     \begin{subfigure}[b]{0.45\textwidth}
         \centering
         \includegraphics[width=\textwidth]{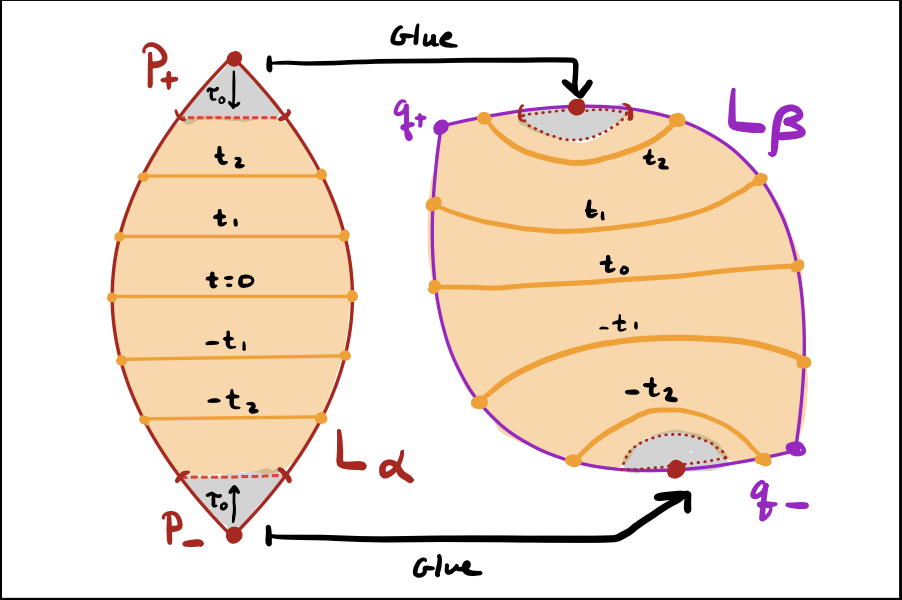}
         \caption{Defining sweepout curves when $\alpha, \beta<\pi$.}
         \label{fig:lessglue}
     \end{subfigure}
        \caption{Two situations for beginning the sweepout construction.}
        \label{fig:sweepoutconstruction}
\end{figure}
  
In whichever case, we intend to pull $\tilde\sigma$ back to the sequence manifolds $(M_k, g_k)$, complete them to full sweepouts, and run Birkhoff's curve shortening process as in \cite{CalabiCao} to obtain a simple closed geodesic of length shorter than $\ell_k$, for some large enough $k$. To do this, we must first find the right maps to pull back by, so that we eventually obtain an admissible sweepout for the min-max theory. We begin by defining the open ``good'' region \[\Omega\coloneqq \left\{x\in(\mathbb S^2,d) \colon d(x, p_{\pm}), d(x, q_{\pm})\geqslant\min\{\Psi(\tau_0:\beta),\Psi(\tau_0)\}\right\}\] where the specific $\Psi(\tau_0:\beta), \Psi(\tau_0)\leqslant\tau_0$ here are those of the previous two paragraphs. 

\begin{lem}\label{ALlemma2}
For any $\varepsilon>0$, for all large enough $k\geqslant 1$ there is an open set $\Omega_k\subset (M_k, g_k)$ such that $\Omega_k$ and $\Omega$, with their \emph{restricted metrics}, are $\varepsilon$-close in the Gromov-Lipschitz sense via a $(1+\varepsilon)$-bi-Lipschitz homeomorphism $f_k\colon \Omega\to f_k(\Omega)=\Omega_k$. In particular, \[(1-\Psi(\varepsilon))\mathrm{Area}_d(\Omega)\leqslant \mathrm{Area}_{g_k}(\Omega_k)\leqslant(1+\Psi(\varepsilon))\mathrm{Area}_d(\Omega).\] Moreover, for any Lipschitz curve $\gamma\subset\Omega$, we have for all $k\geqslant 1$ large enough \[(1-\varepsilon)L_d(\gamma)\leqslant L_{g_k}((f_k)_*(\gamma))\leqslant(1+\varepsilon)L_d(\gamma).\] Lastly, we remark that if $\phi_k:(\mathbb S^2, d)\to (M_k, d_{g_k})$ is an $\varepsilon$-GH approximation, then we can assume without loss of generality that $\phi_k$ and $f_k$ are $\varepsilon$ $d_{g_k}$-uniformly close on $\Omega$. 
\end{lem}

\begin{proof}
Let $\varepsilon>0$. For all large enough $k\geqslant 1$, by Corollary \ref{locYam} we can find homeomorphisms $f_k\colon\Omega\to f_k(\Omega):=\Omega_k\subset M_k$ which are $\varepsilon$-almost isometries: \[\left\vert\frac{d_{g_k}(f_k(x),f_k(y))}{d(x,y)}-1\right\vert<\varepsilon\quad\text{for all } x, y\in\Omega.\] Indeed, the $\delta$-strain radius of any point $p\in\Omega$ is at least $\min\{\Psi(\tau_0:\beta), \Psi(\tau_0)\}>0$ by construction, for any $\delta>0$. Thus, $(\Omega_k, d_{g_k}\vert_{\Omega_k\times\Omega_k})$ and $(\Omega, d\vert_{\Omega\times\Omega})$ are $\varepsilon$-close in the Gromov-Lipschitz sense. The first claim now follows from the definition of the Hausdorff measures associated to $d_{g_k}$ and $d$, and the second claim follows from the definition of the length of a curve in a metric space. The final remark follows from the construction of $f_k$ starting from such a Gromov-Hausdorff approximation $\phi_k$. 
\end{proof}

Returning to the main argument, we may now pull the partial sweepout $\tilde\sigma$ that was just constructed on $(\mathbb S^2, d)$ back to the sequence spaces $(M_k, g_k)$ via the maps $f_k$. For $\tau_0>0$  still to be further determined and $0<\varepsilon<(2\pi-\eta)/(2\pi-2\eta)-1$ (which we will also take even smaller later on), for each $k\geqslant 1$ large enough we fix $f_k$, $\Omega$, $\Omega_k$ as in Lemma \ref{ALlemma2} and define the partial sweepout maps $\tilde\sigma_k$ (defined on some domain $\mathrm{Dom}(\tilde\sigma_k)\subset [-\pi/2,\pi/2]\times\mathbb S^1$) by \[\tilde\sigma_k(t,\theta)=f_k\circ\sigma(t,\theta): \mathrm{Dom}(\tilde\sigma_k)\to (M_k, g_k).\] Since each $f_k$ is bi-Lipschitz, the $\tilde\sigma_k$ are themselves Lipschitz. For each fixed $t$ where $\tilde\sigma_k(t,\cdot)$ is defined on a nonempty subset of $\mathbb S^1$, the (possibly disconnected) curve $(\tilde\sigma_k)_t=\tilde\sigma_k(t,\cdot)$ in $M_k$ has length \[L_{g_k}((\tilde\sigma_k)_t)\leqslant (1+\varepsilon)L_d(\sigma_t)\leqslant (1+\varepsilon)(2\pi-2\eta)<2\pi-\eta.\]

Of course, the image of $\tilde\sigma_k$ misses two or four small Lipschitz disks in $M_k$, and some curves $(\tilde\sigma_k)_t$ might not be closed with up to two disconnected components. To obtain a full sweepout of $M_k$, we must fill in these missing portions. See Figure \ref{fig:partialsweepout}: 
\begin{figure}
    \centering
    \includegraphics[width=0.65\linewidth]{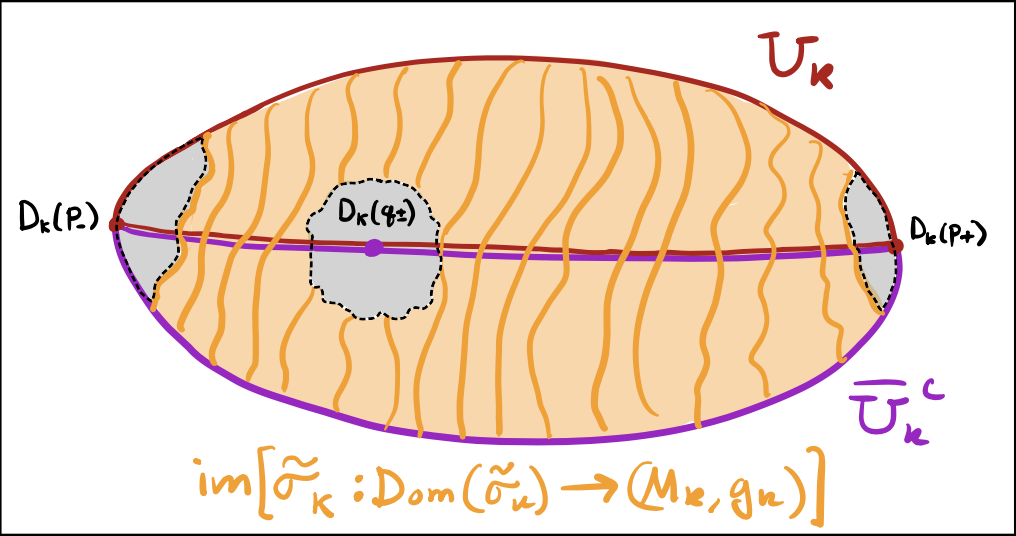}
    \caption{The partial sweepout $\tilde\sigma_k$.}
    \label{fig:partialsweepout}
\end{figure} By the last estimate in Lemma \ref{ALlemma2}, relative to the $g_k$ metric structures the boundaries of these disks, namely the boundary components of $\Omega_k$, are Lipschitz curves. We may thus apply Theorem \ref{LNR} to sweep out these remaining disks, and glue these respective homotopies into $\tilde\sigma_k$ to obtain a full sweepout $\sigma_k:\left[-\pi/2, \pi/2\right]\times\mathbb S^1\to (M_k, g_k)$.

To wit, first let $D_k(p_\pm)$ be the smallest disk in $M_k$ bounded by the curve  $(\tilde\sigma_k)_{t_\pm}$ for $t_\pm=\pm\pi/2\mp\tau_0$. By Theorem \ref{LNR}, the $D_k(p_\pm)$ may be swept out by Lipschitz homotopies $\sigma_{k,\pm}(t,\theta)$ as in Figure \ref{fig:Dp}, parameterized to have the domains $\left[-\pi/2,-\pi/2+\tau_0\right]\times\mathbb S^1$ and  $\left[\pi/2-\tau_0,\pi/2\right]\times\mathbb S^1$, respectively, and which obey 

\begin{enumerate}[label=\textbf{(\Alph*)}]
\setlength\itemsep{.5em}
    \item $\sigma_{k,\pm}(t_\pm,\theta)=\tilde\sigma_k(t_\pm,\theta)$ for every $\theta\in[0,2\pi]$;
    \item $\sigma_{k,\pm}(\pm\pi/2,\theta)\equiv q_{k,\pm}$ for any fixed $q_{k,\pm}\in\partial D_k(p_\pm)$ and for every $\theta\in[0,2\pi]$;
    \item \label{(C)} $L_{g_k}(\sigma_{k,\pm}(t,\cdot))\leqslant 2L_{g_k}(\partial D_k(p_\pm))+686\sqrt{\mathrm{Area}_{g_k}(D_k(p_\pm))}+2\mathrm{diam}_{g_k}(D_k(p_\pm)).$
\end{enumerate} Let us consider estimate \ref{(C)} further. We claim that by choosing $\tau_0, \varepsilon>0$ small enough and $k\geqslant 1$ large enough, the length of any curve in these homotopies is strictly bounded above by $2\pi-\eta$. In the first term of \ref{(C)}, we use Lemma \ref{ALlemma2} to estimate \[L_{g_k}(\partial D_k(p_\pm))\leqslant (1+\varepsilon)L_d(\tilde\sigma_{t_\pm})\leqslant (1+\varepsilon)\Psi(\tau_0: \beta).\] For the second term of \ref{(C)}, by weak convergence of the Hausdorff measures of the $(M_k, g_k)$ to that of $(\mathbb S^2, d)$ (see Theorem 10.8 in \cite{BGP}) and Lemma \ref{ALlemma2}, we have 
\begin{align*}
\mathrm{Area}_{g_k}(D_k(p_\pm))&\leqslant\mathrm{Area}_{g_k}(M_k)-\mathrm{Area}_{g_k}(\Omega_k)\\ 
&\leqslant \mathrm{Area}_d(\mathbb S^2)+\Psi(k^{-1})-(1-\Psi(\varepsilon))\mathrm{Area}_d(\Omega)\\ 
&=\mathrm{Area}_d(\mathbb S^2)+\Psi(k^{-1})-(1-\Psi(\varepsilon))(\mathrm{Area}_d(\mathbb S^2)-\Psi(\tau_0:\beta))\\
&=\Psi(k^{-1})+\Psi(\varepsilon)+\Psi(\tau_0:\beta).\end{align*}
To handle the last term of \ref{(C)}, we first recall from the last part of Lemma \ref{ALlemma2} that the map $f_k$ is $\varepsilon$-close in the $d_{g_k}$ uniform sense on $\Omega$ to a \emph{global} $\varepsilon$-GH approximation $\phi_k$ between $(\mathbb S^2, d)$ and $(M_k, g_k)$. Let $x\in D_k(p_-)$, say, and $\xi\in\mathbb S^2$ such that $\phi_k(\xi)=x$. Let also $p_{k,-}=\phi_k(p_-)$. Since $\mathrm{dis}(\phi_k)<\varepsilon$, we have that \[d_{g_k}(x,p_{k,-})\leqslant d(\xi, p_-)+\varepsilon.\] Suppose for the sake of contraction that $d(\xi, p_-)>\tau_0+2\varepsilon$. Then by definition $\xi\in\mathrm{int}~\Omega$, and for arbitrary $\zeta\in\partial\Omega$, $d(\xi,\zeta)\geqslant d(\xi, \partial\Omega)>2\varepsilon.$ By Lemma \ref{ALlemma2}, we thus obtain for any $\zeta\in\partial\Omega$ \[d_{g_k}(f_k(\xi), f_k(\zeta))\geqslant (1-\varepsilon)d(\xi,\zeta)>2\varepsilon(1-\varepsilon)\] which shows that $d_{g_k}(f_k(\xi), \partial\Omega_k)\geqslant 2\varepsilon(1-\varepsilon)$. On the other hand, for small $\varepsilon>0$ \[d_{g_k}(x,f_k(\xi))=d_{g_k}(\phi_k(\xi),f_k(\xi))\leqslant\varepsilon<2\varepsilon(1-\varepsilon),\] so we would obtain the contradiction that $x\in\Omega_k\subset M_k\setminus D_k(p_-)$. Therefore, we conclude that $d(\xi, p_-)\leqslant\tau_0+2\varepsilon$ and thus \[d_{g_k}(x,p_{k,-})\leqslant \tau_0+3\varepsilon.\] For any $x,y\in D_k(p_-)$ we then obtain \[d_{g_k}(x,y)\leqslant 2\tau_0+6\varepsilon,\] and analogously for $D_k(p_+)$. 

Altogether, then, we find that estimate \ref{(C)} becomes \[L_{g_k}(\sigma_{k,\pm}(t,\cdot))\leqslant \Psi(\tau_0, \varepsilon, k^{-1}: \beta),\] and we extend the Lipschitz partial sweepout $\tilde\sigma_k$ by gluing in these nullhomotopies of the $D_k(p_\pm)$.

\begin{figure}
    \centering
\includegraphics[width=0.35\linewidth]{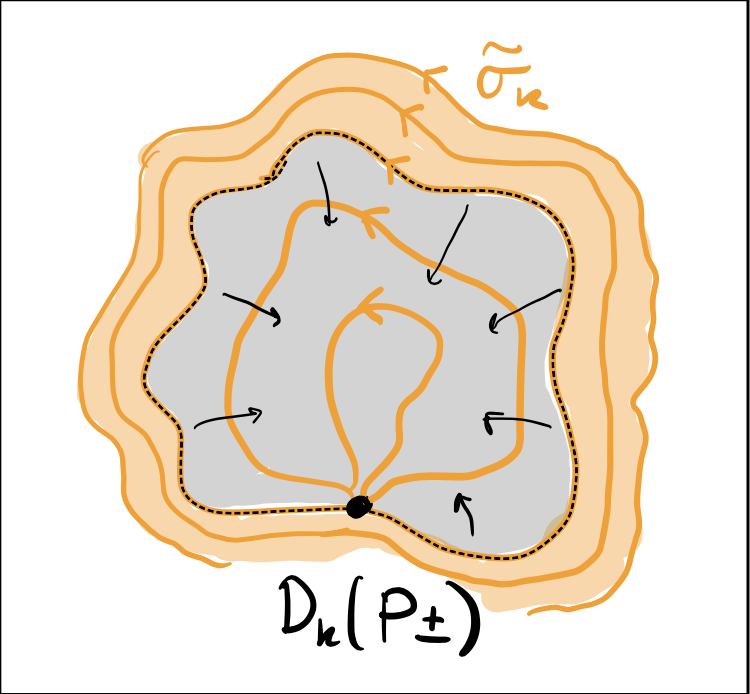}
    \caption{Sweeping out $D_k(p_\pm)$.}
    \label{fig:Dp}
\end{figure}

Consider now the disks $D_k(q_\pm)$, and for the sake of illustration fix attention on $D_k(q_+)$. In this case, the situation is as in \textbf{(A)-(F)} of Figure \ref{fig:Dq}. \textbf{(A), (F):} Let $t_0$ be the smallest $t$ such that $(\tilde\sigma_k)_t$ contacts $\partial D_k(q_+)$, and let $t_1$ be the largest. \textbf{(B), (C):} For $t\in [t_0, t_1]$, the curves $(\tilde\sigma_k)_t$ are cut by $\partial D_k(q_+)$, and we begin filling in the missing portions by gluing in the arcs of the (Lipschitz) boundary of $\partial D_k(q_+)$ between the subsequent endpoints of $(\tilde\sigma_k)_t$ on $\partial D_k(q_+)$ in such a way that as $t$ goes from $t_0$ to $t_1$, the new curves move continuously. \textbf{(D):} When we finally reach $t_1$, we glue in the entirety of $\partial D_k(q_+)$, and then \textbf{(E)} use Theorem \ref{LNR} to homotope this loop, while holding the original portion $(\tilde\sigma_k)_{t_1}$ fixed, to the point of tangency where they were glued together (of course, this will require us to eventually reparametrize the ``time'' parameter $t$ of the sweepouts). By completely analogous estimates as in the $D_k(p_\pm)$ fill-ins, the added length to the original partial sweepout curves can be made $\leqslant \Psi(\tau_0, \varepsilon, k^{-1})$, and the resulting extension of the sweepout maintains its Lipschitz regularity. This completes the fill-in proceedure for the damage left by $D_k(q_+)$, and we carry out a similar fill-in for $D_k(q_-)$. 

\begin{figure}
    \centering
    \includegraphics[width=0.65\linewidth]{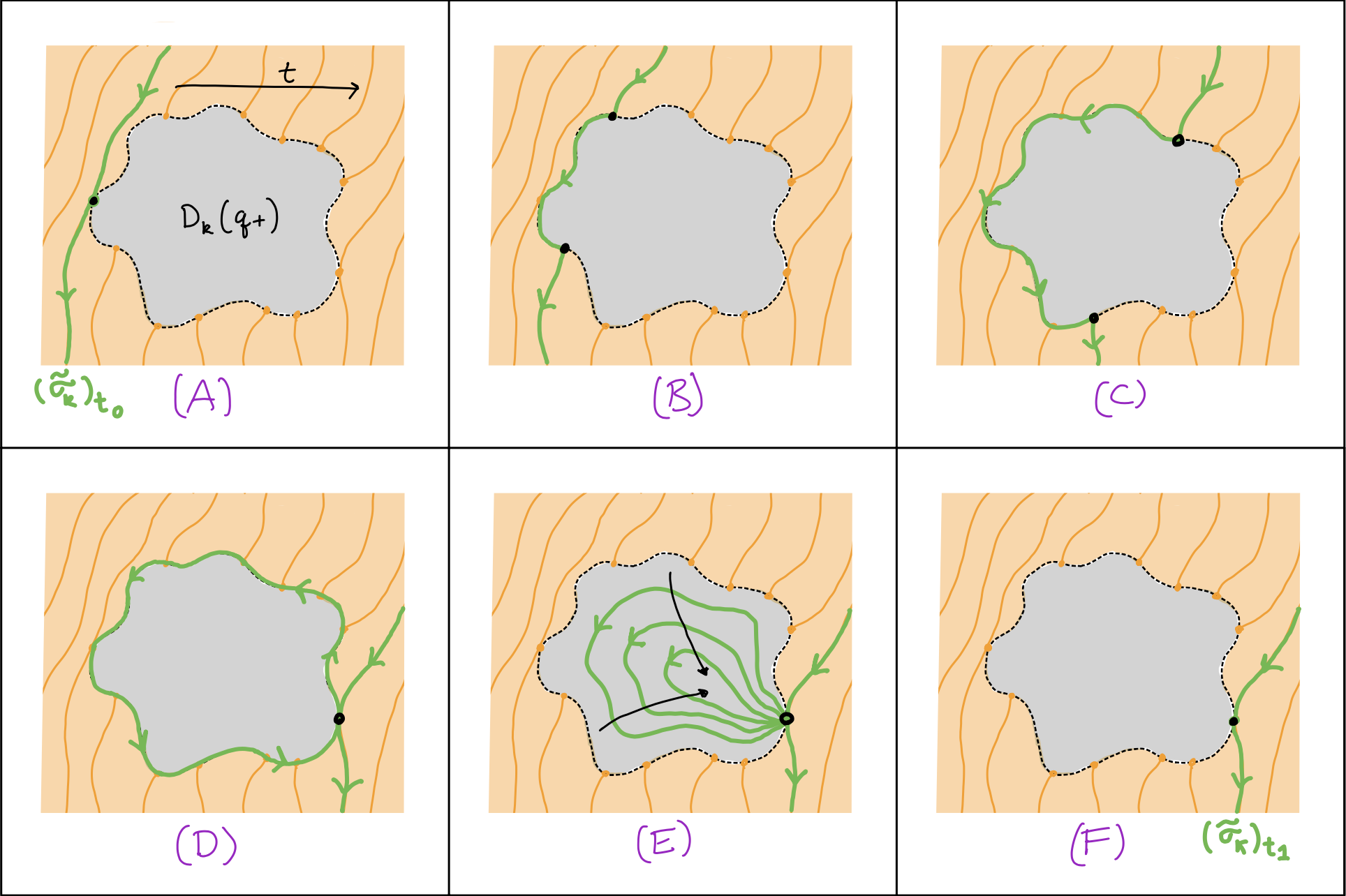}
    \caption{Patching the sweepout in $D_k(q_+)$.}
    \label{fig:Dq}
\end{figure}

We finally reparametrize the time parameter $t$ to lie in $[-\pi/2,\pi/2]$, and take the parameters $\tau_0, \varepsilon>0$ so small, and $k\geqslant 1$ so large, that for all $t$ the sweepout curves $(\sigma_k)_t$ satisfy \[L_{g_k}((\sigma_k)_t)<2\pi-\eta.\] At this stage, we consider the parameters as all fixed, and the Lipschitz maps  $\sigma_k: [-\pi/2,\pi/2]\times\mathbb S^1\to(M_k, g_k)$ extending the partial sweepouts $\tilde\sigma_k$ as above constructed for all large enough $k\geqslant 1$.  Clearly, $\deg_2\sigma_k\neq 0$. Thinking of the induced maps \[ \left[-\frac{\pi}{2}, \frac{\pi}{2}\right]\ni t\mapsto \llbracket(\sigma_k)_t( \cdot)\rrbracket\in \mathcal Z_1((M_k, g_k), \mathbb Z),\] we see that $\llbracket(\sigma_k)_{\pm\pi/2}\rrbracket$ lie in the zero class (in particular $(\sigma_k)_{\pm\pi/2}$ are point curves). The induced map $t\mapsto\llbracket(\sigma_k)_t\rrbracket$ into integral one-cycles is also continuous with respect to the weak topology on currents. To see this, let $s\nearrow t$ in $[-\pi/2,\pi/2]$, and define \[\Sigma_k(s,t)=\sigma_k([s,t]\times\mathbb S^1).\] Note that for every $s<t$, $\llbracket [s,t]\times\mathbb S^1\rrbracket\in\mathcal \mathcal I_2([-\pi/2,\pi/2]\times\mathbb S^1)$, and since the $\sigma_k$ are Lipschitz, $\llbracket\Sigma_k(s,t)\rrbracket=(\sigma_k)_{\#}\llbracket [s,t]\times\mathbb S^1\rrbracket\in\mathcal \mathcal{I}_2(M_k)$ with, crucially, \[\llbracket(\sigma_k)_t\rrbracket-\llbracket(\sigma_k)_s\rrbracket=\pm\partial\llbracket\widetilde{\Sigma_k}(s,t)\rrbracket\] for $\widetilde{\Sigma_k}(s,t)$ some Lipschitz regular subregion of $\Sigma_k(s,t)$. Indeed, before filling in the partial sweepout we would have $\widetilde\Sigma_k(s,t)\equiv\Sigma_k(s,t)$, but the fill-in procedure could plausibly cause the sweepout map $\sigma_k$ to lose injectivity. As $s\nearrow t$, $\mathrm{Area}_{dx^2\oplus d_{\mathbb S^1}}([s,t]\times\mathbb S^1)\to 0$, and thus also $\mathrm{Area}_{g_k}(\widetilde{\Sigma_k}(s,t))\leqslant\mathrm{Area}_{g_k}(\Sigma_k(s,t))\to 0$ (again using that $\sigma_k$ is Lipschitz). But then $\llbracket(\sigma_k)_s\rrbracket\to\llbracket(\sigma_k)_t\rrbracket$ in the weak topology, since for any fixed $\theta\in\Omega^1_c(M_k)$, \begin{align*}\llbracket (\sigma_k)_t\rrbracket(\theta)-\llbracket (\sigma_k)_s\rrbracket(\theta)&=\pm\partial\llbracket\widetilde{\Sigma_k}(s,t)\rrbracket(\theta)=\pm\llbracket\widetilde{\Sigma_k}(s,t)\rrbracket(\mathrm{d}\theta)=\pm\int_{\widetilde{\Sigma_k}(s,t)}\mathrm{d}\theta\to 0.
\end{align*} An identical argument handles the situation where $s\searrow t$ in $[-\pi/2,\pi/2]$. Therefore, $\sigma_k$ is a continuous and non-contractible loop in $\pi_1(\mathcal Z_1((M_k, g_k), \mathbb Z), \llbracket 0\rrbracket)$ which starts and ends at $\llbracket 0\rrbracket$. In short, $\sigma_k$ induces an admissible sweepout of $(M_k, g_k)$ by integral one-currents. 

Now, fix a $k_0\geqslant 1$ so large that $\ell_{k_0}\geqslant 2\pi-\eta$. By construction the longest curve in the sweepout $\sigma_{k_0}$ is of length $<2\pi-\eta$, so by the min-max theory of Almgren-Pitts and Calabi-Cao (see Theorem \ref{AlmgrenPitts}) there exists a simple closed geodesic in $(M_{k_0}, g_{k_0})$ of length less than $2\pi-\eta<\ell_{k_0}$, a contradiction. 

Therefore, we conclude that the angle $\alpha$ of the lens $L_\alpha$ must be $\pi$, and then the same argument applied starting with the lens $L_\beta$ shows that $\beta=\pi$ as well. Thus, the potential singular set of poles is empty and the limit metric is the smooth round metric on $\mathbb S^2$. This proves that the $(M_k,g_k)$ subconverge in the Gromov-Hausdorff sense to $(\mathbb S^2, g_{rd})$. Since the singular set of the limit is now known to be empty (in particular, $\delta\mathrm{str.rad}(\mathbb S^2, g_{rd})=\pi$ for every $\delta>0$) , we can upgrade the Gromov-Hausdorff convergence just obtained to $C^0$-Cheeger-Gromov convergence by Theorem \ref{Yam} and Proposition \ref{GHtoC0}. This forces the desired contradiction and proves the main theorem.
\end{proof}

\section{Funding} The first author was partially supported by NSF grant DMS-1910496

\bibliographystyle{alpha}
\bibliography{biblio1.bib}
\nopagebreak
\end{document}